\documentclass[10pt]{article}
\usepackage{latexsym,amsmath,amsthm,verbatim,ifthen,amssymb}
\usepackage[latin1]{inputenc}
\usepackage[all]{xy}
\usepackage{graphicx}
\usepackage{epsfig}
\newdir{ >}{{}*!/-7pt/\dir{>}}

\setlength{\textwidth}{125mm} \setlength{\textheight}{185mm}

\newtheorem{theorem}{Theorem}
\newtheorem{corollary}[theorem]{Corollary}
\newtheorem{lemma}[theorem]{Lemma}
\newtheorem{proposition}[theorem]{Proposition}
\newtheorem{example}[theorem]{Example}
\newtheorem{definition}[theorem]{Definition}
\newtheorem{remark}[theorem]{Remark}

\def\nil{{\rm{nil}\hskip1pt}}
\def\cat{{\rm{cat}\hskip1pt}}
\def\wcat{{\rm{wcat}\hskip1pt}}
\def\secat{{\rm{secat}\hskip1pt}}

\def\TC{{\rm{TC}\hskip1pt}}
\def\wTC{{\rm{wTC}\hskip1pt}}


\def\Z{{\mathbb{Z}}}

\def\R{{\mathbb{R}}}
\def\C{{\mathbb{C}}}
\def\RP{{\mathbb{R}\mathbb{P}}}
\def\CP{{\mathbb{C}\mathbb{P}}}

\def\disfrac#1#2{{\displaystyle{\frac{#1}{#2}  }}}

\begin{document}

\title{Topological complexity and the homotopy cofibre of the diagonal map
\footnotetext{This work has been supported by FEDER funds through
the Ministerio de Educaci\'on y Ciencia project MTM2009-12081 and
``Programa Operacional Factores de Competitividade - COMPETE'',
 and by FCT -\emph{Fundação para a Ciência e a Tecnologia} through
 projects Est-C/MAT/UI0013/2011 and PTDC/MAT/0938317/2008.
}
}
\author{J.M. Garc\'{\i}a Calcines
 \footnote{Universidad de La Laguna,
 Fundamental, 38271 La Laguna, Spain. E-mail:
 \texttt{jmgarcal@ull.es}}
\,and L. Vandembroucq
 \footnote{Centro
 de Matem\'{a}tica, Universidade do Minho, Campus de Gualtar,
 4710-057 Braga, Portugal. E-mail: \texttt{lucile@math.uminho.pt}}
}

\date{\empty}

\maketitle

\begin{abstract}
In this paper we analyze some relationships between the
topological complexity of a space $X$ and the category of
$C_{\Delta _X},$ the homotopy cofibre of the diagonal map $\Delta
_X:X\rightarrow X\times X$. We establish the equality of the two
invariants for several classes of spaces including the spheres,
the H-spaces, the real and complex projective spaces and almost all
the (standard) lens spaces.
\end{abstract}


\section*{Introduction}

The topological complexity of a space $X,$ $\TC (X)$ is the
sectional category (or Schwarz genus) of the end-points evaluation
fibration $\pi _X:X^I\rightarrow X\times X,$ $\pi _X(\alpha
)=(\alpha (0),\alpha (1)).$ This homotopical invariant was defined
by M. Farber \cite{Far}, \cite{Far2}, giving a topological
approach to the robot motion planning problem. In robotics if one
regards the topological space $X$ as the configuration space of a
mechanical system, the motion planning problem consists of
constructing a program or a devise, which takes pairs of
configurations $(A,B)\in X\times X$ as an input and produces as an
output a continuous path in $X,$ which starts at $A$ and ends at
$B.$ Broadly speaking, $\TC (X)$ measures the discontinuity of any
motion planner in the space. Further developments of the
topological complexity have proved to be a very interesting
homotopical invariant. It not only interacts with robotics but
also with deep problems arising in algebraic topology. In this
sense it is shown that the problem of computing the number
$\TC(\RP^n)$ of the $n$-th real projective space is equivalent to
a classical problem of manifold topology (see \cite{F-T-Y}) which
asks for the minimal dimension of the Euclidean space $N$ such
that there exists an immersion $\RP^n\rightarrow \mathbb{R}^N.$
More exactly M. Farber, S. Tabachnikov and S. Yuzvinsky proved the
following theorem

\begin{theorem}[\cite{F-T-Y}]\label{TC-immersion}{\rm For $n\neq 1,3,7$ the (normalized)
topological complexity of the $n$-dimensional real projective
space $\RP^n$ coincides with the least integer $k$ such that
$\RP^n$ can be immersed in $\R^k$.}
\end{theorem}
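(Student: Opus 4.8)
The plan is to follow the argument of Farber, Tabachnikov and Yuzvinsky, whose guiding idea is that $\TC(\RP^n)$ is, in essence, the ``symmetric'' version of the immersion problem for $\RP^n$: both quantities are governed by the same obstruction problem about the canonical line bundle $\gamma$ over $\RP^n$. I would begin with the lower bounds. From $H^*(\RP^n;\Z/2)=\Z/2[a]/(a^{n+1})$ and the zero-divisor $\bar a=a\times 1+1\times a$, the identity $\bar a^{k}=\sum_i\binom{k}{i}a^{i}\times a^{k-i}$ together with the zero-divisor cup-length estimate yields a lower bound for $\TC(\RP^n)$; the very same binomial pattern controls the dual Stiefel--Whitney classes of $\gamma$, hence the classical Stiefel--Whitney lower bound for the immersion dimension of $\RP^n$. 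So the two invariants have matching elementary lower bounds, and everything reduces to matching the upper bounds.

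Next I would set up the two reductions. On the immersion side one invokes the classical dictionary (Hirsch's immersion theorem together with the analysis of the stable normal bundle $-(n+1)\gamma$): $\RP^n$ immerses in $\R^{m}$ if and only if there is a bilinear map $\R^{n+1}\times\R^{n+1}\to\R^{N}$ that is \emph{nonsingular} (i.e.\ $f(x,y)=0$ forces $x=0$ or $y=0$) and suitably normalized on the two axes (an ``axial'' map), where $N$ depends on $m$ in a fixed way. On the topological-complexity side one applies Schwarz's description of the sectional category of the path fibration $\pi_{\RP^n}$ by iterated fibrewise joins; the geometric input is that the only obstruction to local motion planning on $\RP^n\times\RP^n$ is carried by the double cover classified by the line bundle $L_1\otimes L_2$ (whose first Stiefel--Whitney class is $\bar a$). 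Unwinding this shows that $\TC(\RP^n)$ is governed by the \emph{same} existence problem for nonsingular axial maps, but now subject to the additional requirement that the map be invariant under the coordinate swap --- i.e.\ \emph{symmetric} --- which encodes the interchangeability of the start- and end-configurations of a motion planner.

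It then remains to reconcile the symmetric and non-symmetric problems. One direction is essentially formal: a symmetric nonsingular axial map is in particular a nonsingular axial map, which (keeping track of the dimension shift between the two reductions) yields one of the two required inequalities. For the reverse one must deform a given nonsingular axial map to a symmetric one without paying in the target dimension, and this is carried out by obstruction theory: the obstruction to symmetrization lies in a cohomology group that vanishes for $n\neq 1,3,7$, the vanishing being precisely Adams' theorem on the non-existence of elements of Hopf invariant one. Combining the two directions gives $\TC(\RP^n)=$ immersion dimension of $\RP^n$ for all $n\neq 1,3,7$. (The three excluded values are genuine exceptions: there $\R^{n+1}$ carries a real division algebra structure, $\RP^n$ is an H-space, so $\TC(\RP^n)=\cat(\RP^n)=n$, whereas $\RP^n$ immerses in $\R^{n+1}$ but not in $\R^{n}$.) The hard part is exactly this last step --- correctly formulating the symmetrization obstruction and identifying its vanishing with the Hopf-invariant-one statement --- and it is what restricts the theorem to $n\neq 1,3,7$.
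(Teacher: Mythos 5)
Note first that the paper does not prove this statement: it quotes it from \cite{F-T-Y}, so your sketch has to be measured against the Farber--Tabachnikov--Yuzvinsky argument (of which the paper reproves one ingredient, Lemma \ref{salvation}). Your overall architecture --- reduce both the immersion dimension and $\TC(\RP^n)$ to the existence problem for nonsingular/axial maps --- is the correct one, and your closing parenthesis correctly explains why $n=1,3,7$ are genuine exceptions. But your opening paragraph cannot carry any weight: neither the zero-divisor cup-length nor the dual Stiefel--Whitney classes compute the respective invariants in general, so ``matching elementary lower bounds'' does not reduce the theorem to matching upper bounds; the proof is a direct equivalence of two existence problems, not a computation of two numbers. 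Also, on the immersion side the classical dictionary is with \emph{continuous} axial maps $\RP^n\times\RP^n\to\RP^k$ (Hirsch, Adem--Gitler--James), not with bilinear ones; whether axial maps can always be realized bilinearly is a separate, harder question.

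The serious error is the symmetrization step, which you make the heart of the proof and which does not exist. The FTY characterization is that $\TC(\RP^n)$ equals the least $k$ for which there is a nonsingular map $\R^{n+1}\times\R^{n+1}\to\R^{k+1}$, with \emph{no} symmetry condition; imposing invariance under the coordinate swap leads to a genuinely different (and in general larger) invariant, the symmetric topological complexity. Adams' Hopf-invariant-one theorem is not used to symmetrize anything; it is used to rule out the value $k=n$. Concretely, a nonsingular map into $\R^{n+1}$ descends to a $\Z_2$-axial map $\RP^n\times\RP^n\to\RP^n$, which lifts to a map $S^n\times S^n\to S^n$ of odd bidegree $(d_1,d_2)$, forcing $[d_1\iota,d_2\iota]=d_1d_2[\iota,\iota]=0$ in $\pi_{2n-1}(S^n)$; for odd $n\neq1,3,7$ Adams gives $[\iota,\iota]\neq0$ of order two, a contradiction, and even $n$ is already excluded by the cohomological computation --- this is exactly Lemma \ref{salvation} of the paper. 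Once $k\geq n+1$ is forced, nonsingular maps, axial maps of type $(n,k)$ and immersions $\RP^n\looparrowright\R^k$ all coincide and the theorem follows. Note finally that your obstruction runs the wrong way: Adams' theorem is a \emph{non-existence} statement, and $n=1,3,7$ are precisely the cases where the relevant maps \emph{do} exist, so an obstruction that ``vanishes for $n\neq1,3,7$'' cannot be what Adams provides.
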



The purpose of this paper is to compare the topological complexity
of a space to the classical Lusternik-Schnirelmann of the homotopy
cofibre $C_{\Delta _X}$ of the diagonal map $\Delta
_X:X\rightarrow X\times X$. This question is suggested by our
previous work \cite{Weaksecat} where, by analogy to the weak
category (\wcat) of Berstein and Hilton, we introduced a ``weak"
version of the topological complexity (\wTC) and proved that, for
any space $X$, one has $\wTC(X)=\wcat(C_{\Delta _X}).$ However,
the question of the equality in the non weak case seems to be much
harder than in the weak case.  We obtain inequalities between the
two invariants under some restrictions and we establish the
equality in several particular cases. More precisely, in Section
3, we prove that $\cat(C_{\Delta_X})\leq \TC(X)$ holds when $X$ is
a $H$-space or when $X$ is a $(q-1)$-connected CW-complex
satisfying the condition $\dim(X)\leq q(\TC(X)+1)-2$. We then establish $\cat(C_{\Delta_X})=\TC(X)$ for several examples
including the spheres, the compact orientable surfaces, the
H-spaces and the complex projective spaces. Such formula also
holds for real projective spaces and (standard) lens spaces (with
a slight restriction) but these cases require more material and
are developed in Section 4. Notice that the identity $\TC(\RP^
n)=\cat(C_{\Delta_{\RP^n}})$ together with the result by M.
Farber, S. Tabachnikov and S. Yuzvinsky (Theorem
\ref{TC-immersion} above), shows that the immersion problem for
the real projective space $\RP^n$ is equivalent to the computation
of the L.-S. category of $C_{\Delta _{\RP^n}}.$ The main result of
Section 2 is a majoration of the form $\TC(X)\leq \cat(Y)$ where
$Y$ is the target of a map $f:X\to Y$ satisfying some condition.
This result is in particular useful in order to prove
$\cat(C_{\Delta_X})=\TC(X)$ for H-spaces, real projective spaces
and lens spaces.
 We finally note that, along this paper, we work
with the normalized version of all the invariants related to the
topological complexity and Lusternik-Schnirelmann category. The
definition of these invariants as well as some basic constructions
are recalled in Section 1.

\section{Preliminary notions and results.}
The aim of this section is to recall the most important notions
and results that will be used in this paper. We assume that the
reader is familiarized with the notions of homotopy commutative
diagrams, homotopy pushouts and homotopy pullbacks; we refer the
reader to \cite{M}, \cite{D} or \cite{B-K}. We also point out that
the category in which we shall work throughout this paper is the
category of well-pointed compactly generated Hausdorff spaces.
Therefore all categorical constructions are carried out in this
category.

\subsection{Join of maps}\label{joins}

Recall that, given any pair of maps
$A\stackrel{f}{\longrightarrow}C\stackrel{g}{\longleftarrow }B$,
the \textit{join of} $f$ \textit{and} $g,$ $A*_C B,$ is the
homotopy pushout of the homotopy pullback of $f$ and $g$

$$\xymatrix@C=0.7cm@R=0.7cm{ {\bullet } \ar[rr] \ar[dd] & & {B} \ar[dl] \ar[dd]^g \\
 & {A*_C B} \ar@{.>}[dr] & \\ {A} \ar[ur] \ar[rr]_f & & {C} }$$
where the dotted arrow is the so-called co-whisker map, induced by
the weak universal property of homotopy pushouts. When $C=*$ we
use the notation $A*B.$ Notice that the map $A*_C B\to C$ is only
defined up to homotopy equivalence. Any representative is homotopy
equivalent to the canonical co-whisker map obtained by considering
first the standard homotopy pullback of $f$ and $g$ and then the
standard homotopy pushout of the projections on $A$ and $B$. When
$f$ (or $g$) is a fibration our preferred representative of the
join map $A*_C B\to C$ will be given by the following explicit
construction: we first take the honest pullback of $f$ and $g$
(which is a homotopy pullback) and then we take the standard
homotopy pushout of the projections $A\times_C B\to A$ and
$A\times_C B\to B$. In this way we obtain the following map:
$$\begin{array}{rcl}
A\ast_C B=A\amalg (A\times_C B\times [0,1]) \amalg B/\sim &\to&C\\
&&\\
\langle a,b,t\rangle & \mapsto & f(a)=g(b)\\
\end{array}$$
where $\sim$ is given by $(a,b,t)\sim a$ if $t=0$ and $(a,b,t)\sim
b$ if $t=1$. When $f$ and $g$ are both fibrations, this explicit
construction coincides, up to homeomorphism, with the notion of
sum of two fibrations used by Schwarz \cite{Sch} and is a
fibration whose fibre is the ordinary join of the fibres. Given a
fibration $p:E\to B$ we will call \emph{$n$-fold fiber join} of
$p$ and denote it by $j^n_p:*^n_B E\rightarrow B$ the fibration
obtained by taking the join of $n+1$ copies of $p$. More
precisely, we set $j^0_p=p,$ $*^0_B E=E$ and $j^n_p:*^n_B
E\rightarrow B$ is the join of $j^{n-1}_p$ and $p$. If we denote
by $F$ the fibre of $p$, the fibre of $j^n_p$, denoted by $*^nF,$
is the join of $n+1$ copies of $F$. Using the above explicit
construction the $n$-fold fiber join is natural in the sense that
any commutative diagram
$$\xymatrix{
{E'} \ar[r]^{f'} \ar@{>>}[d]_{p'} & {E} \ar@{>>}[d]^p \\
{B'} \ar[r]_{f} & {B} }$$ where $p$ and $p'$ are fibrations,
induces for each $n\geq 0$ a commutative diagram as follows
$$\xymatrix{
{*^n_{B'}E'} \ar[r] \ar@{>>}[d]_{j^n_{p'}} & {*^n_{B}E} \ar@{>>}[d]^{j^n_p} \\
{B'} \ar[r]_{f} & {B} }$$
Moreover, as a consequence of the Join Theorem
(\cite{D}), if the first square is a (homotopy) pullback then so is the second one.

%



\subsection{L.-S. category and topological complexity.}\label{LS-cat and TC}
The (normalized) \emph{Lus\-ter\-nik-Schnirelmann category} of a space $X$, or
\emph{L.-S. category} for short, is the smallest integer $n$ such
that there is an open cover of $X$ of $n+1$ elements, each of
which is contractible in X. We write $\cat (X)=n$ if such integer
exists; otherwise we state $\cat (X)=\infty .$ A useful lower
bound for $\cat (X)$ is the cup-length of $X,$ $\mbox{cup}(X).$
This is just the index of nilpotency of the reduced cohomology
$\tilde H^*(X)$ with coefficients in any ring $R.$

On the other hand the (normalized) \emph{topological complexity}
of $X,$ $\TC (X)$, is the smallest integer $n$ such that there is
an open cover of $X\times X$ of $n+1$ elements, on each of which
the fibration $\pi _X:X^I\rightarrow X\times X,$ $\pi _X(\alpha
)=(\alpha (0),\alpha (1))$ admits a (homotopy) section. In
\cite{Far}, Farber established that, for any path-connected space
$X$, one has
\begin{equation} \label{Farberinequalities}
\cat (X)\leq \TC(X)\leq \cat(X\times X)\leq 2\cat(X)
\end{equation}
 Also, if $\nil \ker \hspace{3pt}\cup $
denotes the index of nilpotency of $\ker \hspace{3pt}\cup $, where
$\cup :\tilde H^*(X)\otimes \tilde H^*(X)\rightarrow \tilde
H^*(X)$ is the usual cup-product of the reduced cohomology with
coefficients in a field \textbf{K}, then Farber proved that $\nil
\ker \hspace{3pt}\cup $ is a lower bound of $\TC (X).$

L.-S. category and topological complexity are both special cases
of the notion of \emph{sectional category} (or \emph{genus} as
first introduced by A. Schwarz in \cite{Sch}) of a fibration
$p:E\twoheadrightarrow B$. This invariant, written $\secat (p),$
is the smallest integer $n$ such that there is an open cover of
$B$ of $n+1$ elements, on each of which the fibration $p$ admits a
(homotopy) section. We have $\cat(X)=\secat(ev:PX\to X)$ and
$\TC(X)=\secat(\pi _X:X^I\rightarrow X\times X)$. Here, $PX\subset
X^I$ is the space of paths beginning at the base point $*$ and
$ev=ev_1$ is the evaluation map at the end of the path.

The sectional category of a fibration $p:E\twoheadrightarrow B$ can
be characterized in terms of the $n$-fold fiber join of $p$ (see \cite{J}):

\begin{theorem}{\rm
If $B$ is a paracompact space, then \secat$(p)\leq n$ if and only
if $j^n_p:*^n_B E\rightarrow B$ admits a homotopy section.}
\end{theorem}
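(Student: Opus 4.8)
The plan is to establish the two implications separately. In both directions I would first upgrade the available \emph{homotopy} sections to genuine sections by exploiting that both $p$ and $j^n_p$ are fibrations: if $\sigma\colon U\to E$ satisfies $p\sigma\simeq\mathrm{incl}_U$ via a homotopy $H$, lifting $H$ through $p$ and evaluating at time $1$ produces an honest section of $p$ over $U$ (and similarly for $j^n_p$, which is a fibration by the discussion in Section~\ref{joins}). The second ingredient is the explicit iterated model of the fibre join: from the inductive definition $j^n_p = j^{n-1}_p * p$ and the pushout description recalled in Section~\ref{joins}, one checks by induction on $n$ that a point of $*^n_B E$ over $b\in B$ may be recorded as a formal convex combination $\langle t_0e_0,\dots,t_ne_n\rangle$ with $e_i\in p^{-1}(b)$, $t_i\ge 0$, $\sum_i t_i=1$, the $i$-th entry being forgotten when $t_i=0$; this model carries continuous barycentric coordinate maps $t_i\colon *^n_B E\to[0,1]$ and, on the open set $\{t_i>0\}$, a continuous ``$i$-th vertex'' projection $\pi_i\colon\{t_i>0\}\to E$ with $p\circ\pi_i=j^n_p|_{\{t_i>0\}}$.

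For the implication $\secat(p)\le n\ \Rightarrow\ $ (section exists), start from an open cover $U_0,\dots,U_n$ of $B$ together with sections $\sigma_i$ of $p$ over $U_i$ (honest, after the reduction above). Since $B$ is paracompact, choose a partition of unity $\{\rho_i\}_{i=0}^n$ with $\mathrm{supp}\,\rho_i\subset U_i$ and define $s\colon B\to *^n_B E$ by $s(b)=\langle\rho_0(b)\sigma_0(b),\dots,\rho_n(b)\sigma_n(b)\rangle$, with the understanding that the $i$-th vertex contributes nothing where $\rho_i(b)=0$. Continuity of $s$ is precisely the assertion that the $i$-th coordinate is forgotten as $\rho_i\to 0$, which is built into the join relation; and $j^n_p\circ s=\mathrm{id}_B$ because on $\mathrm{supp}\,\rho_i\subset U_i$ one has $p\sigma_i=\mathrm{incl}_{U_i}$ and at least one $\rho_i(b)$ is nonzero. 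Hence $j^n_p$ admits a (in fact honest) section.

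For the converse, suppose $j^n_p$ admits a homotopy section, which we may take to be an honest section $s\colon B\to *^n_B E$ by the reduction above. Set $U_i=\{b\in B:\ t_i(s(b))>0\}$. Each $U_i$ is open by continuity of $t_i$ and $s$, and the $U_i$ cover $B$ because $\sum_i t_i(s(b))=1$ forces $t_i(s(b))\ge 1/(n+1)$ for some $i$. On $U_i$ the composite $\sigma_i:=\pi_i\circ s|_{U_i}\colon U_i\to E$ is defined, and $p\circ\sigma_i=(p\circ\pi_i)\circ s|_{U_i}=j^n_p\circ s|_{U_i}=\mathrm{incl}_{U_i}$, so $\sigma_i$ is a section of $p$ over $U_i$ and therefore $\secat(p)\le n$.

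I expect the only genuine technical point to be the careful description, by induction on $n$, of the barycentric coordinates $t_i$ and of the vertex projections $\pi_i$ on the iterated fibre join $*^n_B E$ built from the explicit pushout model of Section~\ref{joins} (equivalently, the identification of $*^n_B E$ with the fibrewise join equipped with its natural simplicial coordinate structure), together with the verification of continuity of $s$ in the first implication at points where some $\rho_i$ vanishes; both are handled by unwinding that explicit model. Paracompactness of $B$ is used exactly once, to supply the partition of unity in the first implication.
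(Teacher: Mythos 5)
Your argument is correct: it is the classical Schwarz--James proof (partition of unity to assemble local sections into a section of the fibrewise join, and barycentric coordinates to extract local sections from a global one), which is exactly the argument in the reference \cite{J} that the paper cites; the paper itself states this theorem without proof. The only point you rightly flag as needing care --- identifying the iterated pushout model of Section~\ref{joins} with the fibrewise join carrying continuous coordinates $t_i$ and vertex projections $\pi_i$, and checking continuity of $s$ where some $\rho_i$ vanishes --- is routine and is handled in \cite{Sch} and \cite{J}.
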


In particular, considering $p=ev:PX\to X$ we have
$$\cat(X)\leq n \quad \Leftrightarrow \quad j^n_{ev}:*^n_X PX\rightarrow
X \mbox{ admits a homotopy section;}$$ and considering $p=\pi:X^
I\to X\times X$ we have $$\TC(X)\leq n \quad \Leftrightarrow \quad
j^n_{\pi}:*^n_{X\times X} X^ I\rightarrow X\times X \mbox{ admits
a homotopy section.}$$

\noindent Both these fibrations have as fibre the join $*^n\Omega
X$, where $\Omega X$ denotes the loop space of $X.$ Note that the
fibration $j^n_{ev}:*^n_X PX\rightarrow X$ is equivalent to the
so-called \emph{$n$-th Ganea fibration} $g_n:G_n(X)\to X.$ Recall
that $g_0=ev:PX\to X$ and $g_n$ is the fibration associated to the
join of $g_{n-1}$ with the trivial map $\ast \to X.$ These
fibrations fit in a homotopy commutative diagram of the following
form
$$\xymatrix{
{F_0(X)} \ar[d] & {F_1(X)} \ar[d] & {F_2(X)} \ar[d] & {...} \\
{G_0(X)} \ar@{>>}[d]_{g_0} \ar[r] & {G_1(X)} \ar@{>>}[d]_{g_1}
\ar[r] & {G_2(X)} \ar@{>>}[d]_{g_2} \ar[r] & {...} \\
{X} \ar@{=}[r] & {X} \ar@{=}[r] & {X} \ar@{=}[r] & {...} }$$ where
each column is a fibration and for each $n$ $F_n(X)\to G_n(X)\to
G_{n+1}(X)$ is a homotopy cofibration sequence, the so-called
\emph{fibre-cofibre construction}. Numerous lower bounds that
approximate the L.-S. category have been defined in terms of Ganea
fibrations. For instance, in this work
we will use the following lower bound, denoted by $\sigma^1
\cat(X)$:
$$\sigma^1\cat(X)\leq n \quad \Leftrightarrow \quad \Sigma g_n
\mbox{ admits a homotopy section}$$ \noindent where $\Sigma $
denotes the usual suspension functor. This invariant can also be
characterized in the following way (as the weak category of Ganea)
$$\sigma^1\cat(X)\leq n \quad \Leftrightarrow \quad X\to
X\cup_{g_n}CG_n(X) \mbox{ is homotopically trivial}$$ and is
greater than or equal to most of approximations of the L.S.
category. In particular $\sigma^1 \cat(X)$ is greater than or
equal to the weak category in the sense of Berstein-Hilton
($\wcat$), to the $\sigma$-category and to the Toomer invariant.
We refer the reader to \cite{C-L-O-T} for more details and more
information on L.-S. category and its approximations.

\section{Some majorations of $\TC(X).$}

As we have recalled in (\ref{Farberinequalities}) of the previous
section, there is an inequality
$$\TC(X)\leq \cat(X\times X).$$
In many cases, this inequality turns out to be strict. For
instance, for $S^{2n+1}$ an odd-dimensional sphere,
$\TC(S^{2n+1})=1$ while $\cat(S^{2n+1}\times S^{2n+1})=2.$ One of
the objectives of many works on topological complexity (see, for
instance \cite{Farber-Grant}, \cite{Grant}) is to provide
alternative ways to get an upper bound for $\TC$. The following
theorem gives two results in this direction. The first item is an
adaptation for $\TC$ of a result established in \cite{F-H-T} for
the L.-S. category. If $X$ is a space with base point $*$, then
$i_1,i_2:X\rightarrow X\times X$ will denote the inclusions
$i_1(x)=(x,*),$ $i_2(x)=(*,x)$ and $p_1,p_2:X\times X\rightarrow
X$ the projections $p_1(x,y)=x$ and $p_2(x,y)=y$.

\begin{theorem}\label{Majorations of TC}{\rm
Let $f:X\to Y$ be a map between $(q-1)$-connected CW-complexes
($q\geq 1$) which is an $r$-equivalence.
 \begin{enumerate}
 \item[(a)] If $2\dim X\leq r+q\TC(Y)-1,$ then
$\TC(X)\leq \TC(Y)$.
\item[(b)] If there exists a map $g:X\times X\to Y$ such that $f=gi_1$ and $g\Delta_X\simeq *$ and if $2\dim X\leq
r+q\cat(Y)-1,$ then $\TC(X)\leq \cat(Y)$.\\
\end{enumerate}}
\end{theorem}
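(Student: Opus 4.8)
The plan is to characterize $\TC(X)$ via the fibre-join of the path fibration $\pi_X:X^I\to X\times X$ and compare it with the corresponding fibre-join over $Y$. By the genus characterization recalled in Section~\ref{LS-cat and TC}, $\TC(X)\le n$ if and only if $j^n_{\pi_X}:\ast^n_{X\times X}X^I\to X\times X$ admits a homotopy section. For item~(a): since $f:X\to Y$ is an $r$-equivalence between $(q-1)$-connected CW-complexes, $f\times f:X\times X\to Y\times Y$ is (at least) an $r$-equivalence between $(2q-1)$-connected complexes, and there is a pullback-up-to-homotopy square relating $\pi_X$ and $\pi_Y$ along $f\times f$; by the naturality and Join Theorem stated in Section~\ref{joins}, this induces a map of fibrations $\ast^n_{X\times X}X^I\to \ast^n_{Y\times Y}Y^I$ over $f\times f$ whose fibre is $\ast^n\Omega X\to\ast^n\Omega Y$. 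Setting $n=\TC(Y)$, the fibration $j^n_{\pi_Y}$ admits a section $s$; composing with $f\times f$ gives a section of $j^n_{\pi_Y}$ pulled back over $X\times X$, i.e.\ a lift of $f\times f$ through $\ast^n_{Y\times Y}Y^I$. The obstruction to lifting this further through $\ast^n_{X\times X}X^I\to X\times X$ (equivalently, to deforming it into the total space over $X\times X$) lives in cohomology of $X\times X$ with coefficients in the homotopy groups of the fibre of $\ast^n_{X\times X}X^I\to\ast^n_{Y\times Y}Y^I$, which is the homotopy fibre of $\ast^n\Omega X\to\ast^n\Omega Y$. Since $\Omega f$ is an $(r-1)$-equivalence and $\Omega X$ is $(q-2)$-connected, $\ast^n\Omega X\to\ast^n\Omega Y$ is roughly an $(r+nq-1)$-equivalence (the join raises connectivity by $q$ at each of the $n$ stages and by $r$ overall), so its homotopy fibre is $(r+nq-2)$-connected. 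The obstructions therefore vanish once $\dim(X\times X)=2\dim X\le r+nq-1=r+q\TC(Y)-1$, giving a section of $j^n_{\pi_X}$ and hence $\TC(X)\le n=\TC(Y)$.

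For item~(b), the idea is the same but we replace the fibration $\pi_Y$ over $Y\times Y$ by a pulled-back copy of the path fibration $ev:PY\to Y$ over $Y\times Y$. Concretely, the hypotheses $f=gi_1$ and $g\Delta_X\simeq\ast$ say that $g:X\times X\to Y$ restricts to $f$ on the first axis and is trivial on the diagonal; this is exactly the kind of data that lets one compare the \emph{motion-planning} problem on $X$ to the \emph{contractibility} problem on $Y$. I would pull back the Ganea fibration $g_n=j^n_{ev}:G_n(Y)\to Y$ along $g$ to get a fibration over $X\times X$ with fibre $\ast^n\Omega Y$, and show that a section of $g_n$ (which exists iff $\cat(Y)\le n$) composes with $g$ to give a lift of $g$ through $G_n(Y)$, hence (via naturality of the fibre-join and the Join Theorem) a section of the pullback. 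One then needs a map from this pulled-back fibration to $j^n_{\pi_X}$ over $X\times X$; this is produced from a map $X^I\to g^*G_n(Y)$ covering the identity, built from the path-lifting structure together with the null-homotopy $g\Delta_X\simeq\ast$ (which provides the needed path in $Y$ from $gi_1(x)=f(x)$ along the second coordinate). With $n=\cat(Y)$, the obstruction to promoting the resulting lift of $\mathrm{id}_{X\times X}$ to a genuine section of $j^n_{\pi_X}$ again lies in $H^{\ast}(X\times X;\pi_{\ast-1}(\text{fibre}))$ where the relevant fibre is now $(r+nq-2)$-connected by the same connectivity bookkeeping ($g$ restricted to the axis is an $r$-equivalence, $\Omega Y$ is $(q-2)$-connected), so the bound $2\dim X\le r+q\cat(Y)-1$ kills everything and yields $\TC(X)\le\cat(Y)$.

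The main obstacle I anticipate is not the obstruction-theoretic counting, which is routine once the connectivity of the comparison fibre is pinned down, but rather the careful construction in~(b) of the map of fibrations $j^n_{\pi_X}\to g^\ast G_n(Y)$ over $X\times X$: one must use the data $f=gi_1$, $g\Delta_X\simeq\ast$ to manufacture, coherently and in a way compatible with the explicit fibre-join model of Section~\ref{joins}, a comparison between paths in $X$ and paths in $Y$ emanating from the base point. Getting this map to be a fibrewise map of the correct connectivity (so that the Join Theorem applies level by level in the iterated join) is the delicate point; the rest is a standard lifting/obstruction argument over the finite-dimensional complex $X\times X$, mirroring the L.-S.-category result of \cite{F-H-T} that item~(a) adapts.
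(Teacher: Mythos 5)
Your proposal follows essentially the same route as the paper: compare $j^n_{\pi_X}$ with the pullback along $f\times f$ (resp.\ along $g$) of the $n$-fold fibre join of $\pi_Y$ (resp.\ of $ev:PY\to Y$), identify the comparison map as an $(r+qn-1)$-equivalence via $\ast^n\Omega f$, and lift the pulled-back section by obstruction theory over the $2\dim X$-dimensional complex $X\times X$. The point you flag as delicate in (b) --- building $X^I\to PY$ over $g$ from the null-homotopy $g\Delta_X\simeq *$ and identifying the induced map on fibres with $\Omega f$ --- is precisely the step the paper carries out explicitly, so your plan is sound as stated.
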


\begin{proof}
We denote by $f^I:X^I\to Y^I$ and $\Omega f:\Omega X\to \Omega Y$
the maps induced by $f$. For the proof of $(a)$ suppose that $\TC
(Y)\leq n$. We construct the $n$-fold join (recall that this means
the join with $n+1$ factors) of the fibrations $\pi_X:X^I\to
X\times X$ and $\pi_Y:Y^I\to Y\times Y$ and consider the pullback
$Q$ of the fibration $\ast^n_{Y\times Y} Y^I\to Y\times Y$ along
the map $f\times f$. We thus have the following commutative
diagram:
$$\xymatrix{
\ast^n\Omega X\ar[d]\ar[rr]^{\ast^n\Omega f} && \ast^n\Omega Y\ar[d]\ar@{=}[r]
& \ast^n\Omega Y\ar[d]\\
\ast^n_{X\times X}X^I\ar[rr]\ar[dr]&& Q\ar[dl] \ar[r]&\ast^n_{Y\times Y} Y^I \ar[d]\\
&X\times X \ar[rr]^{f\times f} && Y\times Y. }$$ By the homotopy
exact sequence of a fibration one can check that the map
$\ast^n_{X\times X}X^I\to Q$ has the same connectivity as
$\ast^n\Omega f.$ We point out that $Q$ is connected. Indeed, when
$n\geq 1$, this is certainly true as the fibre $\ast^n\Omega Y$ is
connected; when $n=0$ we have $\TC(Y)=0$ so $Y$ is contractible
and therefore $Q\simeq X\times X$ is connected.

The map $\ast^n\Omega f$ is an $(r+qn-1)$-equivalence since $f$ is
an $r$-equivalence (see, for example, \cite{F-H-T}). Now,
considering $\TC (Y)\leq n$ and the pullback property we have that
the fibration $Q\to X\times X$ admits a section $\sigma .$
Finally, since the integer $\dim(X\times X)$ is less than or equal
to the connectivity of $\ast^n\Omega f$, the section $\sigma$
lifts in a homotopy section of $\ast^n_{X\times X}X^I\to X\times
X$ and therefore $\TC(X)\leq n.$

Next we prove $(b)$. Since $g\Delta _X$ is homotopically trivial
we can consider the following diagram in which the left-hand
square is a pullback (and a homotopy pullback) and the right-hand
square is commutative.
$$\xymatrix{PX \ar[r]^{\bar{i_1}}\ar[d]_{ev_{1}}& X^I \ar[d]^{\pi}
\ar[r]^{\bar g} & PY \ar[d]^{ev_1} \\
X \ar[r]_{i_1}& X\times X \ar[r]_g & Y. }$$ By taking the fibres
of the vertical maps, we see that the map $\hat g:\Omega X\to
\Omega Y$ induced by $g$ and $\bar g$ between the fibres is equal
to the map $\hat f$ induced by $f=gi_1$ and $\bar f=\bar g
\bar{i_1}$. On the other hand, it is possible to construct a
fibrewise homotopy between $\bar f$ and $Pf:PX\to PY$ so that
$\hat f$ is homotopic to $\Omega f$ and hence $\hat g\simeq \Omega
f$. Now we suppose that $\cat (Y)\leq n$ and, following the same
strategy as in the proof of $(a)$, we construct the $n$-fold join
of the fibrations $\pi:X^I\to X\times X$
 and $ev_1:PY\to Y$. Then we take the pullback $Q$ of the
fibration $*^n_Y PY\to Y$ along the map $g$ and we obtain:
$$\xymatrix{
\ast^n\Omega X\ar[d]\ar[rr]^{\ast^n\hat g} && \ast^n\Omega Y\ar[d]\ar@{=}[r]
& \ast^n\Omega Y\ar[d]\\
\ast^n_{X\times X}X^I\ar[rr]\ar[dr]&& Q\ar[dl] \ar[r]&\ast^n_Y PY \ar[d]\\
&X\times X \ar[rr]^g && Y. }$$ Since $\ast^n\hat g\simeq
\ast^n\Omega f$ and $f$ is an $r$-equivalence, the map
$\ast^n_{X\times X}X^I\to Q$ is an $(r+qn-1)$-equivalence. Now
from $\cat (Y)\leq n$ and the condition on $\dim(X\times X)$ we
deduce the existence of a homotopy section of $\ast^n_{X\times
X}X^I\to X\times X,$ that is, $\TC(X)\leq n.$
\end{proof}

Here are some applications of Theorem
\ref{Majorations of TC}.

\begin{example}{\rm
Let $\RP^n=S^n/\mathbb{Z}_2$ be the real $n$-dimensional
projective space. If $n\leq k,$ then $\TC(\RP^n)\leq \TC(\RP^k)$.}
\end{example}
This result already appears in \cite{F-T-Y}. Here we give a proof,
which does not depend on Theorem \ref{TC-immersion}.

\begin{proof}
The inclusion $f:\RP^n\hookrightarrow \RP^k$ is an
$n$-equivalence. If $k\geq n+1,$ then we have $\TC(\RP^k)\geq
\cat(\RP^k)=k\geq n+1$. Then, taking $q=1$, we have
$2\dim(\RP^n)=2n\leq n+k-1$ and Theorem \ref{Majorations of TC}(a) permits
to conclude that $\TC(\RP^n)\leq \TC(\RP^k)$.
\end{proof}

\begin{example}{\rm
Similarly, consider $L^{2n+1}_p=S^{2n+1}/\mathbb{Z}_p$ the
$(2n+1)$-dimensional lens space. If $n\leq k,$ then
$\TC(L^{2n+1}_p)\leq \TC(L^{2k+1}_p)$.}
\end{example}

The proof is analogous to that of the previous example using the well known fact that
$\cat(L^{2k+1}_p)=2k+1$ (see, for instance, \cite{C-L-O-T}). Actually, using Lemma \ref{cat-lens} below, one also can prove that
$\TC((L^{\infty }_p)^{(n)})\leq \TC((L^{\infty }_p)^{(k)}),$ for
$n\leq k$, where $(L^{\infty }_p)^{(m)}$ denotes the $m$-skeleton
of the infinite lens space $L^{\infty }_p.$\\

The following example shows how Theorem \ref{Majorations of TC} can
help to compute the topological complexity of a space in a
concrete situation:

\begin{example}{\rm
Let $X=S^3\cup_{\alpha}e^7$ where $\alpha$ is the Blakers-Massey
element of $\pi_6(S^3)$. Then we have that $\TC(X)=3$.}
\end{example}

\begin{proof}
Recall that the symplectic group $Sp(2)$ admits a cellular
decomposition of the form $S^3\cup_{\alpha}e^7\cup_{\beta}e^{10}$.
Then $X=S^3\cup_{\alpha}e^7$ is a subcomplex of $Sp(2)$ such that
the inclusion $f:X\hookrightarrow Sp(2)$ a $9$-equivalence. By
\cite{Schwe} we know that $\cat(Sp(2))=3,$ and Farber showed in
\cite{Far2} that $\TC(G)=\cat(G),$ for any Lie group $G.$
Therefore $\TC(Sp(2))=3$. Theorem \ref{Majorations of TC}(a) with
$q=3$ permits us to obtain the inequality $\TC(X)\leq 3.$  On the
other hand, in \cite{Weaksecat} we proved that $\wTC(X)= 3$. Since
$\wTC(X)\leq \TC(X)$ we conclude that $\TC(X)= 3.$
\end{proof}

In the above example we may also obtain the result from the
statement (b) of Theorem \ref{Majorations of TC} by considering
the map
$$g:X\times X \hookrightarrow Sp(2)\times Sp(2) \rightarrow Sp(2)$$
given by $g(x,y)=xy^{-1}$. In this way we obtain $\TC(X)\leq
\cat(Sp(2))=3$. In the following statement, we generalize this
process to subcomplexes $X$ of a CW-complex $H$ that it is also an
$H$-space. When $X=H$ we will obtain $\TC(H)\leq \cat(H)$ which,
together with the general inequality $\cat\leq \TC$ gives
$\TC(H)=\cat(H)$. We note that this fact is also proved in
\cite{L-S} and generalizes the result by Farber that $\TC(G)=
\cat(G)$ for any Lie group $G$ (\cite{Far2}).

\begin{corollary}\label{TC(X)<=cat(Y)-group}{\rm
Let $H$ be a connected CW-complex which is an $H$-space (with $*$
as unit element). Then
\begin{enumerate}
\item If
$X$ is a $(q-1)$-connected subcomplex of $H$ such that the
inclusion $X\hookrightarrow H$ is an $r$-equivalence and $2\dim
X\leq r+q\cat(H)-1,$ then $\TC(X)\leq \cat(H)$.
\item $\TC(H)=\cat(H).$
\end{enumerate} }
\end{corollary}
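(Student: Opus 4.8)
The plan is to deduce both parts of Corollary \ref{TC(X)<=cat(Y)-group} from Theorem \ref{Majorations of TC}(b) by exhibiting the appropriate map $g:X\times X\to Y$. The natural choice, suggested by the $Sp(2)$ example preceding the statement, is to take $Y=H$ and $g$ to be the composite
$$g:X\times X\hookrightarrow H\times H\xrightarrow{\mu(\mathrm{id}\times\iota)} H,$$
where $\mu$ is the $H$-space multiplication and $\iota$ is a homotopy inverse; informally $g(x,y)=xy^{-1}$. The point is that such a $g$ restricts correctly on the two axes: $gi_1(x)=g(x,*)=x\cdot *^{-1}\simeq x$, which is (homotopic to) the inclusion $f:X\hookrightarrow H$, while $g\Delta_X(x)=g(x,x)=x\cdot x^{-1}\simeq *$, so $g\Delta_X$ is homotopically trivial. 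One has to be slightly careful here: in a general (not necessarily associative or unital-on-the-nose) $H$-space one only gets $\mu(x,*)\simeq x$ and a choice of inverse $\iota$ with $\mu(x,\iota(x))\simeq *$, but up to homotopy this is exactly the data Theorem \ref{Majorations of TC}(b) requires, and $f=gi_1$ can be arranged on the nose by replacing $f$ within its homotopy class if desired.

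For part (1), with this $g$ in hand, the hypotheses of Theorem \ref{Majorations of TC}(b) are met: $X$ and $H$ are $(q-1)$-connected CW-complexes, $f:X\hookrightarrow H$ is an $r$-equivalence by assumption, $g\Delta_X\simeq *$, $f=gi_1$, and $2\dim X\le r+q\cat(H)-1$. Hence $\TC(X)\le\cat(H)$, which is exactly the claimed inequality.

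For part (2), I would apply part (1) (equivalently, Theorem \ref{Majorations of TC}(b) directly) in the special case $X=H$. Here the inclusion $X\hookrightarrow H$ is the identity, hence an $r$-equivalence for every $r$, and in particular for $r$ as large as we like, so the dimension condition $2\dim X\le r+q\cat(H)-1$ is automatically satisfiable (taking $q=1$ suffices, and if $H$ is infinite-dimensional one passes to skeleta or simply notes that any $r$ works); consequently $\TC(H)\le\cat(H)$. Combining this with the general lower bound $\cat(H)\le\TC(H)$ recorded in \eqref{Farberinequalities} yields $\TC(H)=\cat(H)$.

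The only real subtlety, and the step I would be most careful about, is the homotopy-theoretic bookkeeping around the map $g$: making sure that ``$g(x,y)=xy^{-1}$'' is literally a map $X\times X\to H$ (it is, since $X\times X\subset H\times H$ and $\mu,\iota$ are defined on all of $H$), that the identity $f=gi_1$ can be taken to hold strictly rather than just up to homotopy (replace $f$ by $gi_1$, which lies in the same homotopy class and is still an $r$-equivalence, since an $r$-equivalence is a homotopy-invariant notion), and that $g\Delta_X$ is genuinely null-homotopic and not merely null on homotopy groups (it factors through the null-homotopy $\mu(\mathrm{id},\iota)\simeq *$ on $H$, then restricted along $\Delta_X$). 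Once this is set up, everything is a direct invocation of Theorem \ref{Majorations of TC}(b) and the Farber inequalities, with no further computation.
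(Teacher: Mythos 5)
Your proposal is correct and follows essentially the same route as the paper: take $g(x,y)=\mu(x,j(y))$ for a homotopy inverse $j$ of the identity, check that $f=gi_1$ is the inclusion and that $g\Delta_X\simeq *$, then invoke Theorem \ref{Majorations of TC}(b) and combine with the general inequality $\cat\leq\TC$ for part (2). The only difference is that the paper explicitly constructs $j$ (the subtlety you flag) from the fact that the shear map $(x,y)\mapsto (x,xy)$ is a homotopy equivalence for a connected CW $H$-space, which is the standard fact your argument implicitly relies on.
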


\begin{proof}
Since $H$ is a $H$-space that it is a connected CW-complex, the
shear map $\phi:H\times H\to H\times H$ given by
$\phi(x,y)=(x,xy)$ is a homotopy equivalence (\cite[p.
461]{Whitehead}). Let $\psi$ be a homotopy inverse of $\phi$ which
preserves the base point. The map $j:H\to H$ given by $j(x)=p_2
\psi(x,\ast)$ is a right inverse of the identity (recall that $p_2$
denotes the projection onto the second factor). This means that the
composite
$$\xymatrix{
H \ar[r]^{\Delta} & H\times H \ar[r]^{id\times j} & H\times H
\ar[r]^{\mu} & H }$$ where $\mu$ is the multiplication, is
homotopically trivial (\cite[p. 119]{Whitehead}). We note that the
(homotopy) associativity is not required for this fact and that
$j(*)=*$. Now consider the map $g:X\times X\to H$ given by
$g(x,y)=\mu(x,j(y))$. We have $g\Delta_H\simeq *$ and the map
$f=gi_1$ is the inclusion $X\hookrightarrow H$. The hypothesis on
the inclusion together with the statement (b) of Theorem
\ref{Majorations of TC} gives $\TC(X)\leq \cat(H).$ If $X=H$ the
map $f$ is the identity and we obtain $\TC(H)\leq \cat(H)$. The
general inequality $\cat\leq \TC$ allows us to conclude that
$\TC(H)=\cat(H).$
\end{proof}

We finally note that Theorem \ref{Majorations of TC}(b) also
permits us to recover the following result by Farber, Tabachnikov and Yuzvinky.

\begin{theorem}(\cite{F-T-Y}) {\rm
Consider integers $k>n.$ If there is an axial map $g:\RP^n\times
\RP^n\to \RP^k$ of type $(n,k)$, that is a map $\RP^n\times
\RP^n\to \RP^k$ whose restrictions to $*\times \RP^n$ and
$\RP^n\times *$ are homotopic to the inclusion
$\RP^n\hookrightarrow \RP^k$, then $\TC(\RP^n)\leq k$.}
\end{theorem}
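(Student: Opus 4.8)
The plan is to deduce this statement directly from Theorem \ref{Majorations of TC}(b) by taking $Y=\RP^k$, $q=1$, and $f:\RP^n\hookrightarrow\RP^k$ the standard inclusion, which is an $n$-equivalence. First I would check the dimension condition. Since $\cat(\RP^k)=k$, the inequality required in part (b) is $2\dim(\RP^n)=2n\leq r+q\cat(Y)-1 = n+k-1$, i.e. $n+1\leq k$, which holds because $k>n$ and all spaces are integers. So the numerical hypothesis is automatically satisfied under the assumption $k>n$.

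Next I would verify the structural hypothesis of part (b): the existence of a map $g:\RP^n\times\RP^n\to\RP^k$ with $g i_1\simeq f$ and $g\Delta_{\RP^n}\simeq *$. The axial map of type $(n,k)$ provided in the statement gives $g i_1\simeq g i_2\simeq$ the inclusion $\RP^n\hookrightarrow\RP^k$, so the condition $f=gi_1$ is met (up to homotopy, which is all Theorem \ref{Majorations of TC}(b) needs, since its proof only uses $g\Delta_X\simeq *$ and $f\simeq gi_1$ to identify fibres up to homotopy). The remaining point is that $g\Delta_{\RP^n}$ is null-homotopic: the restriction of $g$ to the diagonal is a map $\RP^n\to\RP^k$, and one checks it is homotopic to a constant. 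The clean way to see this is at the level of the defining line bundles/classifying maps: an axial map of type $(n,k)$ corresponds to a bilinear-type map $\R^{n+1}\times\R^{n+1}\to\R^{k+1}$ nonsingular in each variable (a ``nonsingular map''), and the induced map on projective spaces sends $([x],[x])$ to the class of the diagonal value, which lies on the image of a fixed map killing the generator; more directly, $(g\Delta)^*$ of the generator $u\in H^1(\RP^k;\Z_2)$ is $\Delta^*g^*u = \Delta^*(a\otimes 1 + 1\otimes a)=2a=0$ in $H^1(\RP^n;\Z_2)$, and since $\RP^k$ is a $K(\Z_2,1)$ in its bottom cell range — in fact a map $\RP^n\to\RP^k$ is null iff it is trivial on $H^1(-;\Z_2)$ — we conclude $g\Delta_{\RP^n}\simeq *$.

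Then I would simply invoke Theorem \ref{Majorations of TC}(b) with these data to obtain $\TC(\RP^n)\leq\cat(\RP^k)=k$, which is the desired conclusion.

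The main obstacle I anticipate is justifying $g\Delta_{\RP^n}\simeq *$ cleanly, i.e. passing from the cohomological computation $\Delta^*g^*u=0$ to the genuine null-homotopy. This requires the fact that for maps into a real projective space the (mod $2$) cohomology class of a map $W\to\RP^k$ (for $W$ a complex) determines it up to homotopy when $\dim W\leq k$, equivalently that $[W,\RP^k]\cong H^1(W;\Z_2)$ in that range, which follows from $\RP^k$ being $k$-equivalent to $K(\Z_2,1)$ (obstruction theory). One must make sure $\dim\RP^n=n\leq k$, which again is exactly the hypothesis $k>n$. An alternative that avoids obstruction theory altogether is to note that the axial map $g$, being of type $(n,k)$, is by definition homotopic on each axis to the inclusion; composing a suitable construction shows that $g$ factors (up to homotopy) through a map for which the diagonal restriction visibly lands in a contractible subspace. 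Either way the argument is routine once the dimension bookkeeping is in place, and the substance of the result is entirely carried by Theorem \ref{Majorations of TC}(b).
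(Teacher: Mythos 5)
Your proposal is correct and takes essentially the same route as the paper: both apply Theorem \ref{Majorations of TC}(b) with $f=gi_1$ the inclusion (an $n$-equivalence), the dimension check $2n\leq n+k-1$ coming from $\cat(\RP^k)=k$, and the key hypothesis $g\Delta_{\RP^n}\simeq *$. The only difference is that you spell out the cohomological/obstruction-theoretic justification of $g\Delta_{\RP^n}\simeq *$ (for which the strict inequality $n<k$ is exactly what makes $[\RP^n,\RP^k]\to[\RP^n,\RP^\infty]=H^1(\RP^n;\Z_2)$ injective), whereas the paper simply cites \cite{F-T-Y} for this point.
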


\begin{proof} The axial property together with $k>n$ implies that
$g\Delta_X\simeq *$ (see \cite{F-T-Y} for more details). On the
other hand, the composite $f=gi_1$ is homotopic to the inclusion
$\RP^n\hookrightarrow \RP^k$, so it is an $n$-equivalence.
Finally, since $\cat(\RP^k)=k$ we have $2\dim(\RP^n)=2n\leq n+k-1$
and, by Theorem \ref{Majorations of TC}(b), we can conclude that
$\TC(\RP^n)\leq k$.
\end{proof}

\section{$\TC(X)$ and $\cat(C_{\Delta _X}).$}

For a space $X$, we shall denote by $C_{\Delta _X}$ the homotopy
cofiber of the diagonal map $\Delta_X:X\to X\times X$, that is
$C_{\Delta_X}=X\times X \cup_{\Delta_X} CX.$ We also denote by
$\alpha:X\times X \to C_{\Delta_X}$ the induced map. We observe
that, for \emph{locally equiconnected} spaces (i.e., those spaces
for which $\Delta _X$ is a cofibration) $C_{\Delta _X}$ is, up to
homotopy equivalence, the quotient space $(X\times X)/\Delta _X(X)$. Note that the class of locally equiconnected spaces is
not restrictive. For instance, the CW-complexes and the metrizable
topological manifolds fit on such a class of spaces
(\cite{D-E,Du}).

In this section we will analyze the relationship between $\TC(X)$
and $\cat(C_{\Delta _X}).$ Then we give some first examples of
when the equality holds.

\subsection{General results on $\TC(X)$ and $\cat(C_{\Delta _X}).$}

In some examples, that will be developed later, Theorem
\ref{Majorations of TC} will permit us to establish the inequality
$\TC(X)\leq \cat(C_{\Delta_X})$. However, in the general case, we
have to consider a rather strong condition on the dimension of
$X$. Namely, taking $g=\alpha: X\times X\to C_{\Delta_X}$ and
using the fact that the map $gi_1$ is a $(2q-1)$-equivalence, we
obtain the following statement as a direct consequence of Theorem
\ref{Majorations of TC}:
\begin{corollary}
Let $X$ be a $(q-1)$-connected ($q\geq 1$) finite dimensional
CW-complex. If $2\dim(X)\leq 2q-2+q\cat(C_{\Delta_X}),$ then
$\TC(X)\leq \cat(C_{\Delta_X})$.
\end{corollary}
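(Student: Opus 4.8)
The plan is to apply Theorem \ref{Majorations of TC}(b) directly with the map $g=\alpha:X\times X\to C_{\Delta_X}$ and $Y=C_{\Delta_X}$. First I would check that $g\Delta_X\simeq *$: this is immediate from the construction of the homotopy cofibre, since $C_{\Delta_X}=X\times X\cup_{\Delta_X}CX$ comes equipped with a nullhomotopy of $\alpha\circ\Delta_X$ (the cone coordinate provides it). So the hypothesis ``$g\Delta_X\simeq *$'' of Theorem \ref{Majorations of TC}(b) is satisfied with no extra work.

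Next I would identify $f=gi_1=\alpha\circ i_1:X\to X\times X\to C_{\Delta_X}$ and determine its connectivity. Here $i_1(x)=(x,*)$. Since $X$ is $(q-1)$-connected, $X\times X$ is $(q-1)$-connected and the pair $(X\times X,\Delta_X(X))$ — equivalently the homotopy cofibre $C_{\Delta_X}$ relative to its natural basepoint — has its first nontrivial homotopy/homology in degree $2q$, because $\Delta_X$ is itself a $(2q-1)$-equivalence (the diagonal induces an isomorphism on homotopy groups through dimension $2q-2$ when $X$ is $(q-1)$-connected, as one sees from $\pi_k(X\times X)\cong\pi_k(X)\oplus\pi_k(X)$ and the diagonal being $x\mapsto(x,x)$, which is injective and is an iso in the range where only one summand is nonzero, i.e. up to degree $2q-2$). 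Consequently $\alpha:X\times X\to C_{\Delta_X}$ is a $(2q-1)$-equivalence, and I would then argue that its restriction $f=\alpha\circ i_1:X\to C_{\Delta_X}$ is also a $(2q-1)$-equivalence — for instance because $\alpha\circ i_1\simeq\alpha\circ i_2$ and $i_1+i_2$ is related to $\Delta_X$ via the co-H structure, or more simply because in the range below $2q$ the map $i_1$ is itself a $(2q-1)$-equivalence (as $\pi_k(X)\to\pi_k(X)\oplus\pi_k(X)$, first-factor inclusion, is an iso for $k\le 2q-2$ since the second summand vanishes there), and composing with the $(2q-1)$-equivalence $\alpha$ preserves this. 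So I take $r=2q-1$ in Theorem \ref{Majorations of TC}(b).

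Finally I would plug in the numerology. Theorem \ref{Majorations of TC}(b) requires $2\dim X\le r+q\cat(Y)-1$, which with $r=2q-1$ and $Y=C_{\Delta_X}$ reads $2\dim X\le 2q-1+q\cat(C_{\Delta_X})-1=2q-2+q\cat(C_{\Delta_X})$ — exactly the hypothesis of the corollary. Hence all conditions of Theorem \ref{Majorations of TC}(b) are met and we conclude $\TC(X)\le\cat(C_{\Delta_X})$.

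The only genuinely delicate point is the claim that $f=\alpha\circ i_1$ is a $(2q-1)$-equivalence rather than merely that $\alpha$ is; this is where I would spend care, making sure the argument about the connectivity of $i_1$ in the relevant range and the behaviour of $\alpha$ on the image fit together cleanly (equivalently, one can invoke that $i_1$ and the quotient map combine to a map that is a $(2q-1)$-equivalence because the cofibre $CX$ of $i_1$ inside $X\times X$, up to the diagonal identification, only contributes in degrees $\ge 2q$). Everything else is a direct substitution into the already-proven Theorem \ref{Majorations of TC}(b).
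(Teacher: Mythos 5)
Your strategy is exactly the paper's: the corollary is obtained by feeding $g=\alpha\colon X\times X\to C_{\Delta_X}$ into Theorem \ref{Majorations of TC}(b), using that $g\Delta_X\simeq *$ and that $f=gi_1$ is a $(2q-1)$-equivalence, and the numerical condition you check is the right one. The problem lies in the step you yourself single out as delicate: essentially every intermediate connectivity claim you make there is false. The diagonal $\Delta_X$ is \emph{not} a $(2q-1)$-equivalence: on $\pi_k$ it is the map $a\mapsto(a,a)$ into $\pi_k(X)\oplus\pi_k(X)$, which is an isomorphism only when $\pi_k(X)=0$, i.e.\ for $k\leq q-1$, and is never surjective when $\pi_k(X)\neq 0$. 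Consequently $C_{\Delta_X}$ is \emph{not} $(2q-1)$-connected: for $X=S^n$ one has $C_{\Delta_{S^n}}\simeq S^n\cup_{[\iota_n,\iota_n]}e^{2n}$, whose reduced homology begins in degree $q=n$, not $2q=2n$. For the same reason $i_1$ is only a $q$-equivalence (the second summand $\pi_k(X)$ vanishes for $k\leq q-1$, not for $k\leq 2q-2$ as you write), and $\alpha$ is not a $(2q-1)$-equivalence either (for $X=S^n$ with $n\geq 2$ it kills a $\mathbb{Z}$-summand of $H_n$).

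What is true, and what the paper asserts, is that the \emph{composite} $f=\alpha i_1$ is a $(2q-1)$-equivalence even though neither factor is. A clean argument is homological: since $p_1\Delta_X=id$, the map $\Delta_*$ is split injective, so the cofibration sequence yields short exact sequences $0\to H_k(X)\to H_k(X\times X)\to \tilde H_k(C_{\Delta_X})\to 0$. For $k\leq 2q-1$ the K\"unneth cross-terms live in degrees $\geq 2q$, so $H_k(X\times X)\cong H_k(X)\oplus H_k(X)$ with $\Delta_*$ the diagonal and $(i_1)_*$ the first-factor inclusion; hence $(\alpha i_1)_*\colon H_k(X)\to \bigl(H_k(X)\oplus H_k(X)\bigr)/\Delta$ is an isomorphism for $k\leq 2q-1$ (together with a van Kampen check of surjectivity on $\pi_1$ when $q=1$). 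With that repair, the rest of your proof is a correct substitution into Theorem \ref{Majorations of TC}(b).
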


Now we turn to the inequality $\cat(C_{\Delta _X})\leq \TC(X)$.
First we note that in \cite{A-S} M. Arkowitz and J. Strom proved
that if $A\stackrel{h}{\longrightarrow
}Y\stackrel{\alpha}{\longrightarrow }C_h$ is a cofibre sequence
and $f:X\rightarrow Y$ any map, then $\secat{(\alpha f)}\leq
\secat (f)+1;$ in particular $\cat (C_h)=\secat (\alpha h)\leq
\secat (h)+1.$ Specializing this result to $X\stackrel{\Delta
_X}{\longrightarrow }X\times X\stackrel{\alpha}{\longrightarrow
}C_{\Delta _X}$ we obtain $$\cat (C_{\Delta _X})\leq \TC (X)+1$$
\noindent without any assumptions on the space $X$. This is
essentially equivalent to Lemma 18.3 of \cite{Far3}. By looking at
this inequality in terms of open covers and local sections, it is
easy to see that, if for $\TC(X)\leq n$ we have $n+1$ local
sections of $\pi:X^I\to X\times X$ satisfying the additional
condition that $s(x,x)$ is the constant path in $x$ (as long as
$s(x,x)$ has sense), then we can construct a cover of
$C_{\Delta_X}$ by $n+1$ open sets, each of which is contractible
in $C_{\Delta_X}$. In others words, under this additional
condition, we have $TC(X)\geq \cat (C_{\Delta_X})$. Another (very
close) extra condition is that the section $s$ of
$j^n_{\pi}:\ast^n_{X\times X}X^{I}\to X\times X$ required for
$\TC(X)\leq n$ satisfies $s\Delta_X\simeq \delta$ where $\delta:
X\to \ast^n_{X\times X}X^{I}$ is the canonical map induced by the
map $X\to X^I$ which sends $x$ to the constant path in $x$
(compare to the notion of ${\rm relcat}(\Delta)$ introduced in
\cite{DoeraeneElHaouari}).

As we will see, such an extra condition can be
performed when $X$ is an $H$-space but, in general, we need a
condition on the dimension and connectivity of $X$ in order to
establish $\TC(X)\geq \cat(C_{\Delta_X})$. In contrast it is easy
to see $\TC(X)\geq
\sigma^1 cat(C_{\Delta_X})$ without any extra condition. In
summary, we shall prove the following theorem:

\begin{theorem}
\label{cofibrediag<=TC}{\rm Let $X$ be a $(q-1)$-connected
CW-complex ($q\geq 1$). The following statements hold:
\begin{enumerate}
\item[(1)] $\sigma^1 cat(C_{\Delta _X})\leq\TC(X)$.
\item[(2)] If $\dim(X)\leq q(\TC(X)+1)-2,$ then
$\cat(C_{\Delta _X})\leq\TC(X)$.
\item[(3)] If $X$ is an $H$-space, then $\cat(C_{\Delta _X})\leq\TC(X)$.
\end{enumerate}
}
\end{theorem}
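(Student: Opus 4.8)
The three parts require somewhat different techniques, so I would treat them separately, though all revolve around the fibre-cofibre relationship between the Ganea fibrations of $C_{\Delta_X}$ and the fibre-join fibrations associated to $\pi_X:X^I\to X\times X$. The common starting point is the observation, recorded in the excerpt, that $\cat(C_{\Delta_X})=\secat(\alpha\Delta_X)$ and that $\TC(X)\leq n$ is equivalent to the existence of a homotopy section of $j^n_\pi:\ast^n_{X\times X}X^I\to X\times X$. The key geometric input will be that the cofibre of the diagonal is, at least $\pi$-theoretically, built from the same data as the obstruction to sectioning $\pi_X$.

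For part (1), I would use the characterization $\sigma^1\cat(C_{\Delta_X})\leq n$ iff $\Sigma g_n^{C_{\Delta_X}}$ admits a homotopy section, equivalently iff $C_{\Delta_X}\to C_{\Delta_X}\cup_{g_n}CG_n(C_{\Delta_X})$ is homotopically trivial. Assuming $\TC(X)\leq n$, I have a section $s$ of $j^n_\pi$, which yields a section of the $n$-fold fibre join; applying the suspension and using that $\sigma^1$ is a homotopy-functorial lower bound (it factors through the stable/weak information), I would push the section across the map $\alpha:X\times X\to C_{\Delta_X}$. The cleanest route: $\sigma^1\cat$ behaves well under the cofibre construction exactly because suspending kills the diagonal obstruction — concretely, $\Sigma\alpha$ is stably split off, so a stable section of the fibre-join of $\pi_X$ produces a stable section of the Ganea fibration of $C_{\Delta_X}$. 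I expect this part to be the easiest, essentially formal once the stable splitting $\Sigma(X\times X)\simeq \Sigma X\vee\Sigma X\vee\Sigma(X\wedge X)$ and its relation to $\Sigma C_{\Delta_X}$ are invoked.

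For parts (2) and (3), I would exploit the "extra condition" flagged in the excerpt: it suffices to produce a section $s$ of $j^n_\pi$ with $s\Delta_X\simeq\delta$, the canonical lift of the constant-path map, since such a section descends to a nullhomotopy-compatible cover of $C_{\Delta_X}$ and gives $\cat(C_{\Delta_X})\leq n=\TC(X)$. For (3), when $X$ is an $H$-space, I would use the shear-map trick (as in the proof of Corollary \ref{TC(X)<=cat(Y)-group}): the homotopy equivalence $\phi(x,y)=(x,xy)$ carries $\pi_X$ fibrewise to the evaluation fibration pulled back along $(x,y)\mapsto x^{-1}y$ or similar, under which the diagonal $\Delta_X$ maps to $i_1$ composed with a map that is nullhomotopic into the relevant cofibre; a section of a Ganea fibration of $X$ (which exists since $\cat(X)\leq\TC(X)$, or rather we build it from the $H$-structure directly) transports back to a section $s$ with $s\Delta_X\simeq\delta$ by naturality. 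For (2), the dimension hypothesis $\dim X\leq q(\TC(X)+1)-2$ is exactly what is needed so that the obstruction to improving an arbitrary section $s$ of $j^n_\pi$ to one satisfying $s\Delta_X\simeq\delta$ — an obstruction living in a relative homotopy group of the fibre $\ast^n\Omega X$, which is $(q(n+1)-2)$-connected — vanishes for dimensional reasons; I would run the standard obstruction-theoretic argument over the CW-structure of $X$, comparing $s\Delta_X$ with $\delta$ cell by cell. The main obstacle I anticipate is part (2): making the obstruction-theory bookkeeping precise, in particular verifying that the relevant difference obstruction genuinely lives in $H^{k}(X;\pi_k(\ast^n\Omega X))$ with $\pi_k$ vanishing for $k\leq\dim X$, and checking that the modified section still sections $j^n_\pi$ globally (not just over $\Delta_X(X)$) — this requires a fibrewise-over-$X\times X$ version of the homotopy extension argument rather than a naive one over $X$.
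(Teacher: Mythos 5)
Your overall architecture coincides with the paper's: part (1) is proved there exactly as you suggest (the retraction $p_1$ of $\Delta_X$ splits the cofibre sequence after suspension, so $\Sigma\alpha$ has a section, and one composes with the suspension of the comparison map $\ast^n_{X\times X}X^I\to \ast^n_{C_{\Delta_X}}PC_{\Delta_X}$ induced by the nullhomotopy of $\alpha\pi$); and for (2) and (3) the paper likewise reduces to showing that the restriction of the section $s$ over the diagonal agrees, up to the appropriate homotopy, with the canonical section $s_0$ (your $\delta$), using connectivity of the fibre in case (2) and the map $g(x,y)=\mu(x,j(y))$ in case (3). One remark in your favour on (2): no ``improvement'' of $s$ is needed. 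Both $s\Delta_X$ and $\delta$ are sections of the pullback of $j^n_\pi$ along $\Delta_X$, a fibration over $X$ with fibre $\ast^n\Omega X$, which is $\bigl(q(n+1)-2\bigr)$-connected; since $\dim X\leq q(n+1)-2$ every obstruction group $H^k(X;\pi_k(\ast^n\Omega X))$ vanishes and the two sections are automatically vertically homotopic, so the fibrewise homotopy-extension worries you raise do not arise. (The paper's variant is to observe directly that the composite of $s\Delta_X$ with the comparison map into $\ast^n\Omega C_{\Delta_X}$ is nullhomotopic for the same connectivity reason.)

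The genuine gap is the descent step that you dispose of in one clause: ``such a section descends to a nullhomotopy-compatible cover of $C_{\Delta_X}$ and gives $\cat(C_{\Delta_X})\leq n$.'' The paper's pre-theorem remark that this is ``easy to see'' concerns \emph{strict} local sections with $s(x,x)$ equal to the constant path; what you actually have is a global homotopy section of the join fibration $j^n_\pi$ with $s\Delta_X\simeq\delta$ only up to (vertical) homotopy, and passing from this to a homotopy section of the $n$-th Ganea fibration of $C_{\Delta_X}$ is the technical core of the proof. In the paper this is done by pulling the fibrations $j^n_\pi$ and $j^n_{ev_1}$ back over the two legs of the pushout square defining $C_{\Delta_X}$ (i.e.\ along $\Delta_X$ and along $CX\to C_{\Delta_X}$), using the nullhomotopy of $\lambda\bar s$ to factor the restricted section through a map $\tilde s:CX\to\ast^n_{CX}F$, and then invoking the pushout property together with the Gluing Lemma to assemble a map $\sigma:C_{\Delta_X}\to\ast^n_{C_{\Delta_X}}PC_{\Delta_X}$ with $j^n_{ev_1}\sigma$ a homotopy equivalence. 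Without this construction (or an equivalent one, e.g.\ via ${\rm relcat}$), your argument for (2) and (3) establishes the hypothesis of a lemma you have not proved. Similarly, in (3) the assertion that naturality produces $\bar s\simeq s_0$ requires checking that the square relating $\pi$ and $ev_1$ via $\bar g$ is a homotopy pullback and that $s_0$ is the \emph{unique} section (up to homotopy) whose composite into $\ast^n_{CX}W$ factors through the cone; these are the points the paper verifies explicitly.
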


\begin{proof} Consider the following pushout diagram
defining $C_{\Delta _X}$
$$\xymatrix{
X \ar[r]^(.4){\Delta _X} \ar@{ >->}[d] & X\times X \ar[d]^{\alpha} \\
CX \ar[r]_(.4){\chi }& C_{\Delta _X} }$$
and the following commutative diagram
$$\xymatrix{
 X^{I} \ar[r]^{\xi}
\ar@{->>}[d]^{\pi} & PC_{\Delta _X} \ar@{->>}[d]^{ev_1}\\
 X\times X \ar[r]_{\alpha} & C_{\Delta
_X} }$$ where the vertical maps are evaluation fibrations and the
map $\xi$ comes from the homotopy triviality of $\alpha \pi$.
Taking the $n$-th fibre join of the two vertical fibrations we
obtain the following diagram in which $j^n_{ev_1}$ is equivalent
to the $n$-th Ganea fibration of $C_{\Delta_X}$:
$$\xymatrix{
\ast^n_{X\times X}X^{I} \ar[r]^{} \ar@{->>}[d]^{j^n_{\pi}} &
\ast^n_{C_{\Delta _X}}PC_{\Delta _X} \ar@{->>}[d]^{j^n_{ev_1}}\\
 X\times X \ar[r]_{\alpha} & C_{\Delta
_X} }$$ This diagram permits us to prove (1). Indeed, since
$\Delta_X$ admits a retraction, the cofibre sequence
$$X\stackrel{\Delta _X}{\longrightarrow }X\times
X\stackrel{\alpha}{\longrightarrow }C_{\Delta _X}$$ splits after
suspension. Thus $\Sigma \alpha$ admits a homotopy section, and a
homotopy section of $j^n_{\pi}$ will induce, after suspension, a
homotopy section of $\Sigma j^n_{ev_1}.$ As $\Sigma j^n_{ev_1}$
is equivalent to the suspension of the $n$-th Ganea fibration we
get $\sigma^1cat(C_{\Delta _X})\leq \TC(X).$

Now we pull back the fibrations $j^n_{\pi}$ and $j^n_{ev_1}$ along
$\Delta_X$ and $\chi$ respectively. As recalled at the end of Section \ref{joins}, this operation is equivalent to pulling back
the fibrations ${\pi}$ and ${ev_1}$ along $\Delta_X$ and $\chi$
and then taking the $n$-join of the induced fibrations. Then we
can obtain the following commutative cube, where $F$ is
homotopically equivalent to $\Omega C_{\Delta_X}$ and $\lambda$ is
induced by $\xi$ by the pullback property:
$$\xymatrix{
\ast^n_XX^{S^1} \ar[rr]
\ar@{.>}[rd]_{\lambda}\ar@{->>}[dd]^{j^n_{\bar{\pi}}}
& & \ast^n_{X\times X}X^{I} \ar[rd]^{} \ar@{->>}'[d][dd]^(.4){j^n_{\pi}} \\
& \ast^n_{CX}F\ar[dd] \ar[rr] & & \ast^n_{C_{\Delta _X}}PC_{\Delta _X} \ar@{->>}[dd]^{j^n_{ev_1}} \\
X \ar'[r][rr]_(.4){\Delta _X} \ar[rd] && X\times X \ar[rd]_{\alpha} \\
& CX \ar[rr]_{\chi} & & C_{\Delta _X} }$$ Suppose that $\TC(X)\leq
n$ and let $s$ be a section of $j^n_{\pi}$. By the pullback along
$\Delta _X$, $s$ induces a section $\bar s$ of $j^n_{\bar{\pi}}$.
First we prove that, if the map $\lambda \bar s$ is homotopically
trivial, then $\cat(C_{\Delta _X})\leq n.$ Later we will see that
the hypothesis given in the statements of (2) and (3) items permit to
assume this fact. Under the hypothesis $\lambda \bar s\simeq *$, it
is possible to factorize $\lambda \bar s$ through the cone $CX$
giving rise to a map $\tilde s: CX \to \ast^n_{CX}F$.
By the push-out property, the maps $s$, $\bar s$ and
$\tilde s$ induce a map $\sigma: C_{\Delta _X} \to
\ast^n_{C_{\Delta _X}}PC_{\Delta _X},$ and the Gluing Lemma
\cite[II.1.2]{B} implies that the composite $j^n_{ev_1} \sigma$ is
a homotopy equivalence. The composition of $\sigma $ with an
inverse of this homotopy equivalence produces a homotopy section
of $j^n_{ev_1},$ proving that $\cat(C_{\Delta _X})\leq n.$

It remains to see that the hypothesis given in the statements of items (2)
and (3) ensure that $\lambda \bar s$ is homotopically trivial.
For (2), since $X$ is $(q-1)$-connected we have that
$\ast^n_{CX}F\simeq *^n\Omega C_{\Delta _X}$ is $q(n+1)-2$
connected. Therefore, the hypothesis on $\mbox{dim}(X)$ implies
immediately that the map $\lambda \bar s$ is homotopically
trivial.

Before considering the last case (when $X$ is an $H$-space) we
note that in general the canonical section $s_0$ of the fibration
$j^n_{\bar{\pi}}$, that is the section induced by the canonical
section of $\bar{\pi}$ that sends $x$ to the constant loop at $x$,
satisfies $\lambda s_0\simeq *$. It suffices to check that
property at the first stage, for $n=0$, and this is easily done
through an explicit description of the map $\xi$. Notice also that $s_0$ followed by the map $*^n_X X^{S^1}\to *^n_{X\times X}X^I$ is precisely the canonical map $\delta: X\to *^n_{X\times X}X^I$.

Now, when $X$ is
an $H$-space,  we consider, as in the proof of Corollary
\ref{TC(X)<=cat(Y)-group}, a right inverse $j:X\to X$ of the
identity and the map $g:X\times X\to X$ given by
$g(x,y)=\mu(x,j(y))$ where $\mu$ is the multiplication. Since
$g\Delta _X$ is homotopically trivial we have the two following
commutative diagrams:
$$\begin{array}{cc}
\xymatrix{
 X \ar[r]^{\Delta _X}
\ar[d]^{} & X\times X \ar[d]^{g}\\
 CX \ar[r]_{H} & X }
 &
\xymatrix{
 X^{I} \ar[r]^{\bar g}
\ar@{->>}[d]^{\pi} & PX \ar@{->>}[d]^{ev_1}\\
 X\times X \ar[r]_{g} & X }
 \end{array}$$
where $H$ is the null-homotopy and $\bar g$ is given, for $\beta
\in X^I$, by
$$\bar g(\beta)(t)=\left\{\begin{array}{lr}
H([\beta(1),1-2t]) & 0\leq t\leq \frac{1}{2}\\
\mu(\beta(2-2t),j(\beta(1)) & \frac{1}{2}\leq t\leq 1
\end{array}\right.$$
Here $[x,s]$ stands for the class of $(x,s)$ in $CX=X\times
I/X\times 1$. Using the fact that $\bar g$ induces a homotopy
equivalence between the fibres and that $g=p_1\circ \theta$ where
$p_1:X\times X \to X$ is the projection on the first factor and
$\theta:X\times X\to X\times X$ is the homotopy equivalence given
by $\theta(x,y)=(g(x,y),y)$
 we can check that the right-hand diagram is a homotopy pullback.
We denote by $W$ the pullback of the fibration $ev_1:PX\to X$
along the homotopy $H$. Then we have the following commutative
cube in which $\gamma$ is induced by $\bar g$ and the vertical
faces are homotopy pullbacks:
$$\xymatrix{
\ast^n_XX^{S^1} \ar[rr]
\ar@{.>}[rd]_{\gamma}\ar@{->>}[dd]^{j^n_{\bar{\pi}}}
& & \ast^n_{X\times X}X^{I} \ar[rd]^{} \ar@{->>}'[d][dd]^(.4){j^n_{\pi}} \\
& \ast^n_{CX}W\ar@{->>}[dd] \ar[rr] & & \ast^n_{X}PX \ar@{->>}[dd]^{j^n_{ev_1}} \\
X \ar'[r][rr]_(.4){\Delta _X} \ar[rd] && X\times X \ar[rd]_{g} \\
& CX \ar[rr]_{H} & & X }$$ Since $CX$ is contractible the
fibration $j^n_{\bar{\pi}}$ is homotopically equivalent to the
trivial one $X\times \ast^n_{CX} W \to X$ and $\ast^n_{CX} W
\simeq \ast^n \Omega X$. The canonical section $s_0$ of
$j^n_{\bar{\pi}}$ satisfies $\gamma s_0\simeq *$ (it suffices to
check it at the first level $n=0$) and is actually the unique
section, up to homotopy, having this property. Now we come back to
our sections $s$ and $\bar s$. Since $X$ is an $H$-space we know
that $\TC(X)=\cat(X)$ and thus we can suppose that $s$ is induced
by a section of $j^n_{ev_1}: \ast^n_{X}PX\to X$. Using the
commutative cube above we see that this fact implies that $\bar s$
is induced by a section of the fibration $\ast^n_{CX} W \to CX$.
This shows in particular that $\gamma \bar s$ factors through the
cone $CX$ and therefore is homotopically trivial. As a consequence
$\bar s$ must be homotopic to the canonical section $s_0$ and
$\lambda \bar s\simeq \lambda s_0$ is homotopically trivial. This achieves the proof of (3).
\end{proof}

\noindent \textbf{Remark.} When $X$ is a topological group (with
unit element as the base point) we can give a more direct argument
of point (3). Indeed, we can consider $g(x,y)=x\cdot y^{-1}$.
Assuming that $\TC(X)\leq n$ we know that $\cat(X)\leq n$. If
$\zeta:V\to PX$ is a local pointed section of $ev_1:PX\to X,$ then
we obtain on $U=g^{-1}(V)$ a local section $\zeta '$ of $\pi:X^
I\to X\times X$ given by $\zeta '(x,y)(t)=\zeta(x\cdot
y^{-1})(1-t)\cdot y$. This local section sends $(x,x)$ to the
constant path in $x$. In this manner we can obtain a cover of $X\times X$
by $n+1$ open sets together with local sections which send a point $(x,x)$
to the constant path in $x$. Therefore, as mentioned before the statement
of Theorem \ref{cofibrediag<=TC}, this cover will induce a cover of $C_{\Delta_X}$ by $n+1$ open sets, each of
which is contractible in $C_{\Delta_X}$. Hence
$\cat(C_{\Delta_X})\leq n$. Note that this argument uses the
associativity of the multiplication while the argument given in
the proof above does not require any associativity condition.

\subsection{First examples of the equality
$\TC(X)=\cat(C_{\Delta_X})$}\label{examples TC=catcofibre}

\begin{enumerate}
\item[(i)] The first example is quite trivial.
If $X$ is contractible, then $\TC(X)=\cat(C_{\Delta_X})$. It
suffices to note that in this case $C_{\Delta_X}$ is also
contractible.

\item[(ii)] For any $n\geq 1$, $\TC(S^n)=\cat(C_{\Delta_{S^n}})$.

As Farber showed in \cite{Far}, $\TC(S^n)$ is $1$ if $n$ is odd
and $2$ if $n$ is even. In \cite{Weaksecat} we established that
$C_{\Delta_{S^n}}$ is homotopy equivalent to
$S^n\cup_{[\iota_n,\iota_n]}e^{2n}$ where $\iota_n$ denotes the
homotopy class of the identity of $S^n$ and $[\iota_n,\iota_n]$
the Whitehead product. Using the results of \cite{B-H} together
with the classical results on the Hopf invariant of the Whitehead
product $[\iota_n,\iota_n]$, it is easy to see that the category
of this space is also $1$ if $n$ is odd and $2$ if $n$ is even.
Thus we have the equality $\TC(S^n)=\cat(C_{\Delta_{S^n}})$. Note
that Theorem \ref{Majorations of TC} together with the knowledge of
$\cat(C_{\Delta_{S^n}})$ and of $\nil\ker \cup $ for the spheres
permit us to recover the value of $\TC $ for theses spaces.

\item[(iii)]\label{TC=CatCofibre-group}If $X$ is a $H$-space that
is a connected CW-complex, then $ \TC(X)=\cat(C_{\Delta_X})$.

As we have previously done, we consider a right inverse $j:X\to X$
of the identity and the map $g:X\times X\to X$ given by
$g(x,y)=\mu(x,j(y))$ where $\mu$ is the multiplication. Since $g
\Delta_X$ is trivial $g$ factors, up to homotopy, through
$C_{\Delta_X}$:
$$\xymatrix{
X\ar[r]^{i_1}&X\times X \ar[d]_{\alpha}\ar[r]^g & X\\
& C_{\Delta_X} \ar[ru]_{\tilde g} }$$ Since $g i_1\simeq id_X$ we
obtain $\tilde g \alpha  i_1\simeq id_X$. In other words, $X$ is a
homotopy retract of $C_{\Delta_X}$ and therefore $\cat(X)\leq
\cat(C_{\Delta_X}).$ Since $\cat(X)=\TC(X)$ by Corollary
\ref{TC(X)<=cat(Y)-group}, we get the inequality $\TC(X)\leq
\cat(C_{\Delta_X})$. The other inequality follows from Theorem
\ref{cofibrediag<=TC}.


\item[(iv)]
 Let $\CP^n$ denote the $n$-th complex
projective space. For any $n\geq 1$ we have that $\TC(\CP^n)=
\cat(C_{\Delta_{\CP^n}})$. Indeed, by Farber (\cite{Far}) we know
that
$$\TC(\CP^n)=\nil\ker \cup=2n.$$ In \cite{Weaksecat} we established
that, for any space $X$, $\nil\ker \cup\leq
\wTC(X)=\wcat(C_{\Delta_X})$. Thus we obtain
$\wcat(C_{\Delta_{\CP^n}})\geq 2n$ and therefore
$\cat(C_{\Delta_{\CP^n}})\geq 2n$. Since $\CP^n$ is $1$-connected
and $2n$-dimensional we have, by Theorem \ref{cofibrediag<=TC},
$\cat(C_{\Delta_{\CP^n}})\leq \TC(\CP^n)=2n$. In conclusion,
$\cat(C_{\Delta_{\CP^n}})=\TC(\CP^n)=2n$.

\item[(v)]
In general, $\TC(X)=\cat(C_{\Delta_{X}})$ for $X$ any closed,
$2n$-di\-men\-sional, 1-connected symplectic manifold. Again
$\TC(X)=\nil\ker \cup=2n$ (\cite{F-T-Y}) and by Theorem
\ref{cofibrediag<=TC}, $\cat(C_{\Delta_{X}})=\TC(X)=2n$.

\end{enumerate}
The argument in the previous two examples can be summarized as
follows: if $X$ is a $(q-1)$-connected CW-complex such that
$\TC(X)=\nil\ker \cup$ and $\dim(X) \leq q(\TC(X)+1)-2,$ then one
has $\TC(X)= \cat(C_{\Delta_{X}})$. Using this observation, we
obtain the following results:

\begin{enumerate}

\item[(vi)] For any compact orientable surface of genus $g$,
$X=\Sigma_g$, one has $\TC(X)= \cat(C_{\Delta_{X}})$.

Here we use the result of \cite{Far} that,
$\TC(\Sigma_g)=\nil\ker\cup=\left\{\begin{array}{cc} 2 & \mbox{ if } g\leq 1\\
4 & \mbox{ if } g> 1.\end{array}\right.$ Notice that the condition
$\dim(X) \leq q(\TC(X)+1)-2$ is not satisfied when $g=1$ but, in
this case, the result follows from Example (iii) since
$\Sigma_1=S^1\times S^1$ is an $H$-space.

\item[(vii)] For any connected finite graph $X$, one has $\TC(X)= \cat(C_{\Delta_{X}})$.

Here we use the result of \cite{Far2} that
$\TC(X)=\nil\ker\cup=\left\{\begin{array}{cc} 0 & \mbox{ if } b_1(X)=0\\
1 & \mbox{ if } b_1(X)=1\\ 2 & \mbox{ if }
b_1(X)>1,\end{array}\right.$ where $b_1(X)$ is the first Betti
number. Notice that the condition $\dim(X) \leq q(\TC(X)+1)-2$ is
not satisfied in the first two cases but, in these cases, the
result follows from Examples (i) and (ii) since $X$ is either
contractible or homotopy equivalent to $S^1$.

\item[(viii)] Let $X=F(\R^m,n)$ be the space of configurations of $n$ distinct
points in $\R^m$ with $m\geq 2$ and $n\geq 2$. One has $\TC(X)=
\cat(C_{\Delta_{X}})$.

Here we use the fact that $X$ is an $(m-2)$-connected space, which
is homotopy equivalent to a finite CW-complex of dimension less
than or equal to the product $(m-1)(n-1),$ and the result by
Farber-Yuzvinsky \cite{F-Y} and Farber-Grant \cite{Farber-Grant}
that
$$\TC(X)=\nil\ker\cup=\left\{\begin{array}{cc} 2n-2 & \mbox{ if }m \mbox{ odd} \\
2n-3 & \mbox{ if }m \mbox{ even.}\end{array}\right.$$
The only case in which the condition $\dim(X) \leq q(\TC(X)+1)-2$
is not satisfied is when $n=m=2$ but, in this case
$X=F(\R^2,2)\simeq S^1$ and the result follows from  Example (ii)
(or (iii)). We observe that, in general, $F(\R^2,n)$ is homotopy
equivalent to $X\times S^1,$ where $X$ is a finite polyhedron of
dimension less than or equal to $n-2$ (\cite{F-Y}).
\end{enumerate}

\section{The case of real projective spaces and standard lens spaces.}
\label{section RP and lens spaces}

Recall that the $n$ dimensional real projective space is
$\RP^n=S^n/\mathbb{Z}_2$ and that the (standard) $2n+1$
dimensional $p$-torsion lens space is given by $L_p^ {2n+1}=S^
{2n+1}/\mathbb{Z}_p$. Here $p>2$ is an odd integer and the action
of $\mathbb{Z}_p,$ identified with the multiplicative group
$\{1,\omega,...,\omega^{p-1}\}\subset \C$ of $p$-th roots of
unity, on $S^{2n+1}\subset \C^{n+1}$ is given by the pointwise
multiplication. The infinite real projective space and the
infinite $p$-torsion lens space are respectively
$\RP^{\infty}=S^{\infty}/\mathbb{Z}_2$ and $L^{\infty
}_p=S^{\infty}/\mathbb{Z}_p$.

The aim of this section is to establish the following result:

\begin{theorem}\label{Theorem RPn-lens spaces}{\rm Let $p>2$ be an odd integer and $n\geq 0$ any integer.
\begin{enumerate}
\item[(a)] For any $n\geq 1$ we have the equality $\TC(\RP^n)=
\cat(C_{\Delta_{\RP^n}}).$
\item[(b)] If $n+1$ and $p$ are not powers of a common odd prime, then
$\mbox{TC}(L^{2n+1}_p)=\mbox{cat}(C_{\Delta _{L^{2n+1}_p}}).$
\end{enumerate}}
\end{theorem}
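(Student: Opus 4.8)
The inequality $\cat(C_{\Delta_X})\le\TC(X)$ is the easy direction for both $\RP^n$ and the lens spaces: since $X$ is a $(q-1)$-connected CW-complex with $q=1$, Theorem~\ref{cofibrediag<=TC}(2) would require $\dim(X)\le\TC(X)-1$, which fails here, so instead one appeals to the fact (established in Section~3) that $\cat(C_{\Delta_X})\le\TC(X)+1$ always holds together with the extra-section criterion: one must produce, for $\TC(X)\le n$, a homotopy section $s$ of $j^n_\pi$ with $s\Delta_X\simeq\delta$. I expect this is exactly where the axial-map machinery of Section~4 (not shown in the excerpt) enters: $\RP^n$ and $L^{2n+1}_p$ are quotients of spheres by free actions of $\Z_2$ and $\Z_p$, and the group structure on the fibre lets one arrange the sections to be ``constant on the diagonal.'' So the plan for this direction is to exploit the homogeneity of the sphere quotient to build the motion-planning sections with $s(x,x)=$ constant path, then invoke the open-cover translation from Section~3 to get $\cat(C_{\Delta_X})\le\TC(X)$.

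For the reverse inequality $\TC(X)\le\cat(C_{\Delta_X})$, the route is Theorem~\ref{Majorations of TC}(b), exactly as advertised in the Introduction and in the axial-map corollary at the end of Section~2. One needs a map $g:X\times X\to C_{\Delta_X}$ with $g=\alpha i_1$ composite being an $r$-equivalence, with $g\Delta_X\simeq *$, and with $2\dim X\le r+q\cat(C_{\Delta_X})-1$. The natural choice is $g=\alpha:X\times X\to C_{\Delta_X}$ itself, for which $g\Delta_X\simeq *$ is automatic and $\alpha i_1$ is a $(2q-1)$-equivalence; with $q=1$ this gives $r=1$, and the dimension condition becomes $2\dim X\le \cat(C_{\Delta_X})$, which again fails. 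So the delicate point is to find a \emph{better} target $Y$ than $C_{\Delta_X}$ — a space through which $\alpha$ factors and which has the same (or controlled) category but to which a larger $r$-equivalence maps from $X$. For $\RP^n$ one expects $Y$ to be built from $\RP^\infty=B\Z_2$ (a stunted projective space or a truncated $\RP^\infty$), using the classifying-space description of the diagonal cofibre; the axial maps $\RP^n\times\RP^n\to\RP^k$ are precisely the ingredient that makes $\alpha$ factor through a low-dimensional projective space, and Theorem~\ref{TC-immersion} shows that the relevant $k$ is governed by immersion dimension, i.e.\ by $\cat$ of the cofibre. For $L^{2n+1}_p$ one expects the analogous construction over $B\Z_p$, which is why the lens-space statement needs the number-theoretic hypothesis that $n+1$ and $p$ are not powers of a common odd prime — this is the condition under which the relevant ``axial-type'' map into a truncated $L^\infty_p$ exists.

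The main obstacle, then, is the construction in Section~4 of the auxiliary map $g$ (equivalently, the appropriate factorization of $\alpha$ through a low-dimensional model) for real projective and lens spaces, and checking that its source-target connectivity $r$ is large enough to beat the dimension bound $2\dim X\le r+q\,\cat(C_{\Delta_X})-1$ in Theorem~\ref{Majorations of TC}(b). Concretely I would: (1) identify $C_{\Delta_{\RP^n}}$ up to the needed range with a stunted projective space (as was done for spheres in Example~(ii) via the Whitehead-product cell structure), so that $\cat(C_{\Delta_{\RP^n}})$ is computable and an axial map gives the factorization; (2) run Theorem~\ref{Majorations of TC}(b) with that $Y$ to get $\TC(\RP^n)\le\cat(C_{\Delta_{\RP^n}})$; (3) combine with the extra-section argument above for the opposite inequality; (4) repeat verbatim for $L^{2n+1}_p$, the only change being that the cell structure of $C_{\Delta_{L^{2n+1}_p}}$ and the existence of the axial-type map now invoke the hypothesis that $n+1$ and $p$ share no common odd prime power, which is exactly the obstruction-theoretic condition making step~(1) go through.
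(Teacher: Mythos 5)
Your plan goes wrong in both directions. For the inequality $\cat(C_{\Delta_X})\leq \TC(X)$ you assert that the hypothesis of Theorem~\ref{cofibrediag<=TC}(2) (namely $\dim(X)\leq \TC(X)-1$ for $q=1$) ``fails here'' and you propose instead to build motion-planning sections that are constant on the diagonal by exploiting ``homogeneity of the sphere quotient.'' Both points are off: the paper's proof consists precisely in verifying the dimension condition, which \emph{does} hold once one inputs the known lower bounds $\TC(\RP^n)\geq n+1$ for $n\neq 1,3,7$ (from \cite{F-T-Y}) and $\TC(L_p^{2n+1})\geq 2n+3$ (from the Farber--Grant weight bound plus a gcd argument on the binomial coefficients $\binom{n+1}{j}$), the exceptional cases $n=1,3,7$ being handled by the $S^1$ and $H$-space examples of Section~3.2. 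Your substitute argument, by contrast, has no support: sections with $s(x,x)$ the constant path can be arranged for topological groups (and, via Theorem~\ref{cofibrediag<=TC}(3), for $H$-spaces), but $\RP^n$ for $n\neq 1,3,7$ and the lens spaces in question carry no such structure, and the quotient-of-a-sphere description gives you no way to produce them; this step of your plan is a genuine gap.

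For the reverse inequality your instinct to use Theorem~\ref{Majorations of TC}(b) with a truncated projective (or lens) space as target is right, but your step (1) --- ``identify $C_{\Delta_{\RP^n}}$ up to the needed range with a stunted projective space so that $\cat(C_{\Delta_{\RP^n}})$ is computable'' --- is neither carried out nor needed, and it is not how the argument can be made to work (no such computation of $\cat(C_{\Delta_{\RP^n}})$ is available; by Theorem~\ref{TC-immersion} it would amount to solving the immersion problem). The paper's mechanism is indirect: set $k=\cat(C_{\Delta_X})$, note that the classifying map ($m$ on $\RP^\infty$, resp.\ $\mu(x,y)=xy^{-1}$ on $L^\infty_p$) restricted to $X\times X$ kills the diagonal and hence factors through $C_{\Delta_X}$; the hypothesis $\cat(C_{\Delta_X})=k$ lets this factor through the $k$-th Ganea fibration of $B\Z_2$ (resp.\ $B\Z_p$), and Lemma~\ref{ganea} converts that into a factorization through the $k$-skeleton $\RP^k$ (resp.\ $(L_p^\infty)^{(k)}$). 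The resulting map $X\times X\to$ (skeleton) is a $\Z_2$- (resp.\ $\Z_p$-) axial map, and the \emph{non-existence} results (Lemma~\ref{salvation}, via Adams's Hopf invariant one theorem, and Proposition~\ref{axial}/Corollary~\ref{axial2} of Gonz\'alez) force $k\geq n+1$ (resp.\ $k\geq 2n+2$), which is exactly what makes the dimension hypothesis $2\dim X\leq r+q\cat(Y)-1$ of Theorem~\ref{Majorations of TC}(b) hold with $Y$ the skeleton. Note in particular that you have the role of the arithmetic hypothesis on $n+1$ and $p$ backwards: it is used to rule out $\Z_p$-axial maps $L_p^{2n+1}\times L_p^{2n+1}\to L_p^{2n+1}$ (a lower bound on $k$), not to guarantee the existence of an axial-type map. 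Without the Ganea-fibration lemma and the non-existence results your plan cannot close either inequality.
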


As mentioned in the introduction, a consequence of this result is the following reformulation of Theorem \ref{TC-immersion}:

\begin{corollary}{\rm
For $n\neq 1,3,7$ the least integer $k$ such that
$\RP^n$ can be immersed in $\R^k$ is equal to $\cat(C_{\Delta_{\RP^n}}).$}
\end{corollary}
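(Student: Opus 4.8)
The plan is to chain the two main results already available. By Theorem~\ref{TC-immersion} of Farber, Tabachnikov and Yuzvinsky, for $n\neq 1,3,7$ the normalized topological complexity $\TC(\RP^n)$ equals the least integer $k$ for which $\RP^n$ admits an immersion into $\R^k$. By part (a) of Theorem~\ref{Theorem RPn-lens spaces}, one has $\TC(\RP^n)=\cat(C_{\Delta_{\RP^n}})$ for every $n\geq 1$. Composing these two equalities yields, for $n\neq 1,3,7$, that the least $k$ admitting an immersion $\RP^n\to\R^k$ equals $\cat(C_{\Delta_{\RP^n}})$, which is the assertion.

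The only point to verify is that the hypotheses match: Theorem~\ref{TC-immersion} requires $n\neq 1,3,7$ while Theorem~\ref{Theorem RPn-lens spaces}(a) requires only $n\geq 1$, so on the range $n\neq 1,3,7$ both statements apply and may be used in succession. No further argument is needed.

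In this sense the corollary itself presents no obstacle; all the substance lies in Theorem~\ref{Theorem RPn-lens spaces}(a), whose proof is developed in Section~\ref{section RP and lens spaces} and which combines the inequality $\cat(C_{\Delta_{\RP^n}})\leq\TC(\RP^n)$ (a consequence of the connectivity and dimension estimate of Theorem~\ref{cofibrediag<=TC}) with the reverse inequality $\TC(\RP^n)\leq\cat(C_{\Delta_{\RP^n}})$ (obtained along the lines of Theorem~\ref{Majorations of TC}(b), using an axial map on $\RP^n\times\RP^n$, as explained in Section~3). Since $\RP^n$ is not an $H$-space, producing both inequalities is the genuinely delicate part and is where the extra material of Section~\ref{section RP and lens spaces} is needed; but that work belongs to the proof of Theorem~\ref{Theorem RPn-lens spaces}(a), and once that theorem is in place the corollary follows in a single line from Theorem~\ref{TC-immersion}.
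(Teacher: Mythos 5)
Your proof is correct and matches the paper's own (implicit) argument exactly: the corollary is obtained by combining Theorem~\ref{TC-immersion} with Theorem~\ref{Theorem RPn-lens spaces}(a), and your check that the hypotheses ($n\neq 1,3,7$ versus $n\geq 1$) are compatible is all that is required. Nothing further is needed.
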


The proof of Theorem \ref{Theorem RPn-lens spaces} is based on
Theorems \ref{Majorations of TC} and \ref{cofibrediag<=TC} but
requires more material which will be developed in the following
two sections. The proof of Theorem \ref{Theorem RPn-lens spaces}
is given in Section \ref{Proof-Theorem RPn-lens spaces}.

\subsection{Real projective spaces, lens spaces and Ganea fibrations}

One ingredient of the proof of Theorem \ref{Theorem RPn-lens
spaces} is the relation, given in the following lemma, between the
inclusion $\RP^n\hookrightarrow \RP^{\infty}$ (resp.
$L_p^{2n+1}\hookrightarrow L_p^{\infty}$ or, more generally,
$(L_p^{\infty})^{(k)}\hookrightarrow L_p^{\infty}$) and the $nth$
Ganea fibration of $\RP^{\infty}$ (resp. $L_p^{\infty}$). Here
$(L_p^{\infty})^{(k)}$ denotes the $k$ skeleton of $L_p^{\infty}$
with respect to the standard CW-decomposition of $L_p^{\infty}$
(see \cite{Whitehead}).

\begin{lemma}\label{ganea}{\rm
For each $k\geq 0$ there exist homotopy commutative diagrams of the following form:
$$\xymatrix{
{\RP^k} \ar[r] \ar@{^(->}[rd] & {G_k(\RP^{\infty})} \ar[r]\ar[d]^{g_k} & {\RP^k} \ar@{^(->}[ld] \\
 & {\RP^{\infty}} & }
\quad
\xymatrix{
{(L^{\infty }_p)^{(k)}} \ar[r] \ar@{^(->}[rd] & G_k(L^{\infty }_p)
\ar[r] \ar@{>>}[d]^{g_k} & {(L^{\infty }_p)^{(k)}}\ar@{^(->}[ld]\\
&L^{\infty }_p&
}$$
 }
\end{lemma}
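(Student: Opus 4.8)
The plan is to exploit the standard model of the $k$-skeleton of $B\mathbb{Z}_2 = \RP^\infty$ (resp.\ $B\mathbb{Z}_p = L^\infty_p$) as the $k$-fold fibrewise join of the universal cover, and then identify this with the $k$-th Ganea fibration via the characterisation of $\cat$ in terms of fibre joins recalled in Section~\ref{joins} and Section~\ref{LS-cat and TC}. Concretely, write $G=\mathbb{Z}_2$ (resp.\ $G=\mathbb{Z}_p$) acting freely on $S^\infty = EG$, and recall that $S^\infty$ is, up to $G$-homotopy, the infinite join $G*G*G*\cdots$, with the $k$-fold join $*^k G$ (a free $G$-space) giving, after passing to the quotient, exactly the $k$-skeleton $(BG)^{(k)}$ — that is $\RP^k$ when $G=\mathbb{Z}_2$ and $(L^\infty_p)^{(k)}$ when $G=\mathbb{Z}_p$. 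On the other hand, the $k$-th Ganea fibration $g_k\colon G_k(BG)\to BG$ is the fibre join $j^k_{p}$ of the path fibration $p=ev\colon P(BG)\to BG$, whose fibre is $\Omega(BG)\simeq G$; so both $(BG)^{(k)}\to BG$ and $G_k(BG)\to BG$ arise as $k$-fold fibre joins of fibrations over $BG$ with fibre (homotopy equivalent to) the discrete group $G$.

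First I would make precise the comparison at level $0$: the double cover $S^\infty \to \RP^\infty$ (resp.\ the $p$-fold cover $S^\infty\to L^\infty_p$) is a fibration with fibre $G\simeq \Omega(BG)$, hence by the universal property of the Ganea construction (or by direct obstruction theory, since the base is a $K(G,1)$ and the total space is contractible) it is fibre-homotopy equivalent over $BG$ to $g_0=ev\colon P(BG)\to BG$. Then, applying the $k$-fold fibre join functor and using its naturality (end of Section~\ref{joins}) together with the fact that the fibre join of covering-space fibrations realises the join of the fibres, I get a fibre-homotopy equivalence $*^k_{BG} S^\infty \simeq G_k(BG)$ over $BG$. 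Restricting the left-hand side to the $k$-skeleton and using that $*^k_{BG}S^\infty$ has total space the free $G$-space $*^{k}G$, whose quotient is $(BG)^{(k)}$, produces a map $(BG)^{(k)}\to G_k(BG)$ over $BG$; composing with the projection of the fibre join back to $BG$ (which on the skeleton is the inclusion $(BG)^{(k)}\hookrightarrow BG$) gives the left triangle of the diagram. For the right triangle, I would use that the fibre join $*^k_{BG}S^\infty\to BG$ admits a section over its own $k$-skeleton — indeed the inclusion $*^{k-1}G \hookrightarrow *^k G$ of one fewer join factor, after quotienting, realises a section of $g_k$ restricted to the appropriate skeleton, yielding the map $G_k(BG)\to (BG)^{(k)}$ whose composite with $g_k$ is the skeletal inclusion. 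Keeping careful track of connectivity, the composite $(BG)^{(k)}\to G_k(BG)\to (BG)^{(k)}$ is the identity up to homotopy because both maps cover the inclusion into $BG$ and the skeleton is $k$-dimensional while the fibre of $g_k$ is $(k-1)$-connected, so the relevant obstruction groups vanish.

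The main obstacle I anticipate is not the top-level identification but the bookkeeping needed to produce the right-hand map $G_k(BG)\to (BG)^{(k)}$ with the stated compatibility: one must show that the Ganea space $G_k(BG)$, which is a priori an infinite-dimensional homotopy-theoretic construction, receives a map from the finite complex $G_k(BG)$ compatible with the inclusion — this is really an instance of the fibre join of the cover being a cofibrantly built model whose $k$-skeleton is $(BG)^{(k)}$, so the cleanest route is to work throughout with the explicit join models $*^k G$ rather than with abstract Ganea spaces, and only at the end invoke the fibre-homotopy equivalence $*^k_{BG}S^\infty\simeq G_k(BG)$. In the lens space case there is the extra point that one must use the \emph{standard} CW-structure on $L^\infty_p$ (as in \cite{Whitehead}), for which the $k$-skeleton is exactly $(L^\infty_p)^{(k)}=S^{k}/\mathbb{Z}_p$ when $k$ is odd and the quotient of the appropriate join otherwise; this is why the statement is phrased skeleton-by-skeleton rather than only for $L^{2n+1}_p$. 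Once the join models are set up the two triangles follow formally, and I would present the argument uniformly for a finite cyclic group $G$ acting freely on $S^\infty$, specialising to $G=\mathbb{Z}_2$ and $G=\mathbb{Z}_p$ at the end.
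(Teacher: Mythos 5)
Your argument is fine for $\RP^\infty$ and there it coincides with the paper's: since $*^k\mathbb{Z}_2=S^k$, the $k$-fold fibre join of the double cover has total space $S^\infty\times_{\mathbb{Z}_2}S^k\simeq \RP^k$, so the Ganea fibration $g_k$ of $\RP^\infty$ is, up to fibre homotopy equivalence, the skeletal inclusion and both triangles are immediate. But for the lens spaces your key identification fails. The quotient of the $k$-fold join $*^kG$ by the free $G$-action is the Milnor stage $B_kG$, and for $G=\mathbb{Z}_p$ with $p>2$ this is \emph{not} the $k$-skeleton of the standard CW-structure on $L^\infty_p$: already for $k=1$, the join of two copies of $p$ points is the complete bipartite graph $K_{p,p}$, homotopy equivalent to a wedge of $(p-1)^2$ circles, whereas $(L^\infty_p)^{(1)}=L^1_p=S^1$. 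More generally $*^{2n+1}\mathbb{Z}_p$ is a wedge of many $(2n+1)$-spheres, not $S^{2n+1}$, so $E_k\mathbb{Z}_p\neq S^k$ and $B_k\mathbb{Z}_p\neq (L^\infty_p)^{(k)}$. Your remark that the standard $k$-skeleton is ``the quotient of the appropriate join'' conflates the two constructions; the lemma is stated for the standard skeleta precisely because they differ from the Milnor stages, and this is where the real work lies.

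Consequently your proposal establishes a diagram with $B_k\mathbb{Z}_p$ in the corners rather than $(L^\infty_p)^{(k)}$, and the comparison between the two is missing. The paper supplies it in two steps you would need to reproduce: an explicit $\mathbb{Z}_p$-equivariant map $E_{2n+1}\mathbb{Z}_p\to S^{2n+1}$ (built from $p$-th roots of unity, using that $p$ is odd so the relevant vectors never vanish), which descends to $B_{2n+1}\mathbb{Z}_p\to L^{2n+1}_p$ over $L^\infty_p$ and yields the right-hand triangle for odd $k$; and then one application of the fibre--cofibre construction, using that the homotopy fibre of $L^{2n+1}_p\hookrightarrow L^\infty_p$ is $S^{2n+1}$ with cofibre of the covering map equal to $(L^\infty_p)^{(2n+2)}$, to pass from odd to even skeleta. (The left-hand triangle is unproblematic in all cases, since $\cat((L^\infty_p)^{(k)})\le k$ gives the lift into $G_k$.) Without some such comparison your fibre-join argument cannot produce the map $G_k(L^\infty_p)\to (L^\infty_p)^{(k)}$ required by the statement.
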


In order to prepare the proof of this lemma we recall that, since
$S^{\infty}$ is contractible, $\RP^{\infty}$ and $L^{\infty }_p$
can be identified with the Milnor classifying spaces $B{\mathbb
Z}_2$ and $B{\mathbb Z}_p,$ respectively. On the other hand, when
$X$ is the classifying space $BG$ of a group $G$, the $n$-th Ganea
fibration of $X=BG$ and the inclusion $B_nG\hookrightarrow BG$ of
the $n$-th stage of the classifying construction are known to be
related. In particular, for each $n\geq 0$, there exists a
homotopy commutative diagram of the following form:


\begin{equation}\label{Ganea-Milnor construction}
\xymatrix{
B_nG \ar[r] \ar@{^(->}[rd] & G_n(BG) \ar[r]\ar[d]^{g_n} & B_nG\ar@{^(->}[ld]\\
&BG& }
\end{equation}
The left-hand side follows from the fact that $\cat(B_nG)\leq n$
(see, for instance, \cite{Selick}) and the right-hand side can be
established inductively through the fibre-cofibre process.

When $G=\mathbb{Z}_2$, it is well-known that the inclusion
$B_n\mathbb{Z}_2\hookrightarrow B\mathbb{Z}_2$ coincides with the
inclusion $\RP^n\hookrightarrow \RP^{\infty}$ so that the first
part of Lemma \ref{ganea} is established. Actually we point out
that, in the real projective spaces case, the inclusion
$\RP^n\hookrightarrow \RP^{\infty}$ is, up to homotopy
equivalence, the Ganea fibration $G_n(\RP^{\infty})\to
\RP^{\infty}$. This follows from the fact that
$\RP^n\hookrightarrow \RP^{\infty}$ can be obtained from
$\RP^{n-1}\hookrightarrow \RP^{\infty}$ through the fibre-cofibre
construction.

By contrast, when $G=\mathbb{Z}_p$, the inclusion
$B_n\mathbb{Z}_p\hookrightarrow B\mathbb{Z}_p$ does not coincide
with the inclusion $(L_p^{\infty})^{(n)}\hookrightarrow
L_p^{\infty}$ so that we will need to be more explicit in this
case.

Recall that the $G$-principal bundle $EG\to BG$ can be constructed
as follows: consider the cone $CG=G\times I/G\times 0$ and, for
each $n\geq 0$, consider the subspace $E_nG\subset (CG)^{n+1}$
consisting of the elements $((g_0,t_0),...,(g_n,t_n))$ such that
$\sum t_i=1$. Thus $E_nG\subset E_{n+1}G$ (through the
identification
$((g_0,t_0),...,(g_n,t_n))=((e,0),(g_0,t_0),...,(g_n,t_n))$ where
$e$ is the unit element and the base point of $G$) and $EG$ is
defined as $\bigcup\limits_nE_nG$ equipped with the weak topology.
The elements of $E_nG$ (resp. $EG$) are written as
$\sum\limits_{i=0}^ng_it_i$ (resp. $\sum\limits_{i\geq 0}
g_it_i$). The spaces $B_nG$, $BG$ are defined as the orbits spaces
$E_nG/G$, $EG/G$ with respect to the action $g\cdot \sum g_it_i=
\sum g\cdot g_it_i.$

\noindent For $G=\mathbb{Z}_p$, $\omega$ a primitive $p$-th root of $1$ and $n\geq 0$, the map
$$\begin{array}{rcl}
E_{2n+1}\mathbb{Z}_p & \to & S^{2n+1}\subset \C^{n+1}\\
\sum\limits_{i=0}^{2n+1}g_it_i& \mapsto &
\disfrac{(t_0\omega^{g_0}+t_1\omega^{g_1},...,t_{2n}\omega^{g_{2n}}+t_{2n+1}\omega^{g_{2n+1}})}
{||(t_0\omega^{g_0}+t_1\omega^{g_1},...,t_{2n}\omega^{g_{2n}}+t_{2n+1}\omega^{g_{2n+1}})||}
\end{array}$$
is well-defined (since $p$ is odd, the vector
$(t_0\omega^{g_0}+t_1\omega^{g_1},...,t_{2n}\omega^{g_{2n}}+t_{2n+1}\omega^{g_{2n+1}})$
does not vanish) and is $\mathbb{Z}_p$-equivariant. Thus we obtain
a map $B_{2n+1}\mathbb{Z}_p \to L_p^{2n+1}$ which is compatible
with the inclusions and which is, at the limit, a homotopy
equivalence (since $E\mathbb{Z}_p$ and $S^{\infty}$ are both
contractible):
\begin{equation}
\label{Milnor-lens spaces}
\xymatrix{
B_{2n+1}\mathbb{Z}_p \ar[d]\ar[r] & L_p^{2n+1} \ar[d] \\
 B\mathbb{Z}_p
\ar[r]^{\simeq } & L_p^{\infty} }
\end{equation}

\noindent Recall also that the standard CW-decomposition of
$L_p^{\infty}$ is induced by the CW-decomposition of $S^{\infty }$
consisting of the cells
$$\begin{array}{l}
E^{2k}=\{(z_0,...,z_k)\in S^{2k+1}:z_k\in \R \mbox{ and }z_k\geq 0\}\\
E^{2k+1}=\{(z_0,...,z_k)\in S^{2k+1}:0\leq \mbox{arg}(z_k)\leq \frac{2\pi }{p}\}
\end{array}$$
where $k\geq 0$ and by their images under the action of
$\mathbb{Z}_p$. In this decomposition the cells of dimension $\leq
2n+1$ give a cell decomposition of $S^{2n+1}.$ The images of the
cells $E^k$ under the covering map $q:S^{\infty }\rightarrow
L^{\infty}_p$ give a CW-decomposition of $L^{\infty}_p$ with just
one cell on each dimension and such that
$(L^{\infty}_p)^{(2n+1)}=L^{2n+1}_p.$ Since, in this
CW-decomposition of $S^{\infty}$, the $(2n+2)$-skeleton can be
identified to a cone over the $(2n+1)$-skeleton (which is
$S^{2n+1}$), we can identify the covering map $S^{2n+1}\to
L^{2n+1}_p$ with the attaching map of the $2n+2$ dimensional cell
in $L_p^{\infty}$. In other words, the following commutative
diagram is a homotopy pushout:
\begin{equation}\label{attachingmapevencells}
\xymatrix{
{S^{2n+1}} \ar@{^{(}->}[r] \ar[d]_{} &
{(S^{\infty})^{(2n+2)}} \ar[d]^q  \\
{L }_p^{2n+1} \ar@{^{(}->}[r] & {(L^{\infty }_p)^{(2n+2)}}
 }
 \end{equation}
We also observe that the covering maps $S^{2n+1}\to L^{2n+1}_p$
and $S^{\infty}\to L_p^{\infty}$ are fibrations and fit in a
commutative diagram which is a pullback and a homotopy pullback:
\begin{equation}
\label{homotopyfibreoftheinclusion}\xymatrix{
{S^{2n+1}} \ar@{^{(}->}[r] \ar@{>>}[d]_{} &
{S^{\infty }} \ar@{>>}[d]^q  \\
{L_p^{2n+1}} \ar@{^{(}->}[r] & {L^{\infty }_p} }
\end{equation}

Now we come to the proof of Lemma \ref{ganea}.

\begin{proof} As said above, the real projective spaces case follows directly from diagram (\ref{Ganea-Milnor construction}).
So, we just have to prove the existence, for any $k\geq 0$, of a homotopy commutative diagram
$$\xymatrix{
{(L^{\infty }_p)^{(k)}} \ar[r] \ar@{^(->}[rd] & G_k(L^{\infty }_p)
\ar[r] \ar@{>>}[d]^{g_k} & {(L^{\infty }_p)^{(k)}}\ar@{^(->}[ld]\\
&L^{\infty }_p& }$$ The left part comes from the fact that
$\cat({(L^{\infty }_p)^{(k)}})\leq \dim({(L^{\infty
}_p)^{(k)}})=k$. For $k=0,$ we also have the right part since
$(L^{\infty}_p)^{(0)}=*$. Suppose now that $k=2n+1$ with $n\geq
0$. Using diagrams (\ref{Ganea-Milnor construction}),
(\ref{Milnor-lens spaces}) and the fact that $L_p^{\infty}$ and
$B\mathbb{Z}_p$ are homotopy equivalent, we easily see that, for
each $n\geq 0$, there exists a homotopy commutative diagram of the
following form:
$$\xymatrix{
G_{2n+1}(L_p^{\infty}) \ar[dr]_{g_{2n+1}} \ar[rr]&& L_p^{2n+1}=(L_p^{\infty})^{(2n+1)} \ar[dl]\\
&L_p^{\infty}  & }$$ which establishes the lemma for $k=2n+1$. We
now apply the fibre-cofibre process to this diagram. On the left
side we will obtain the $(2n+2)$-th Ganea fibration of
$L_p^{\infty}$. On the other hand, from diagrams
(\ref{homotopyfibreoftheinclusion}) and
(\ref{attachingmapevencells}), we know that the homotopy fiber of
the inclusion $L_p^{2n+1}\to L_p^{\infty}$ is $S^{2n+1},$ that the
induced map $S^{2n+1}\to L_p^{2n+1}$ is the covering map and that
the homotopy cofibre of this map is equivalent to
$(L_p^{\infty})^{(2n+2)}$. Then, through the fibre-cofibre
process, the previous diagram induces a homotopy commutative
diagram of the following form
$$\xymatrix{
G_{2n+2}(L_p^{\infty}) \ar[dr]_{g_{2n+2}} \ar[rr]&& (L_p^{\infty})^{(2n+2)} \ar[dl]\\
&L_p^{\infty}  &
}$$
which completes the proof.
\end{proof}

Just as a remark, we point out that we have used in the previous proof the inequality $\cat({(L^{\infty
}_p)^{(k)}})\leq k$. Actually we have:

\begin{lemma}\label{cat-lens}
{\rm$\cat({(L^{\infty }_p)^{(k)}})=k,$ for all $k\geq 0.$ }
\end{lemma}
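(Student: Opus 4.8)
The plan is to establish the two inequalities $\cat((L^\infty_p)^{(k)})\leq k$ and $\cat((L^\infty_p)^{(k)})\geq k$ separately. The upper bound is immediate and was already used above: any CW-complex of dimension $k$ has L.-S. category at most $k$, and $\dim((L^\infty_p)^{(k)})=k$ since the standard CW-decomposition of $L^\infty_p$ has exactly one cell in each dimension. So the content is entirely in the lower bound, and the natural tool is the cup-length with respect to a well-chosen coefficient ring, since $\cuplength(X)\leq \cat(X)$ for any ring of coefficients.

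The key step is therefore to compute enough of the cohomology ring of $(L^\infty_p)^{(k)}$. First I would recall that $H^*(L^\infty_p;\Z_p)\cong \Z_p[y]\otimes \Lambda(x)$ with $\deg x=1$, $\deg y=2$, $\beta(x)=y$, where $\beta$ is the Bockstein; this is the standard computation for $B\Z_p$. The inclusion $(L^\infty_p)^{(k)}\hookrightarrow L^\infty_p$ is a $k$-equivalence (it is the $k$-skeleton inclusion and $L^\infty_p$ is a CW-complex with one cell per dimension), hence induces an isomorphism on $H^i(-;\Z_p)$ for $i\leq k$ and a surjection in degree... well, in any case an isomorphism for $i\le k$. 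Consequently, in $H^*((L^\infty_p)^{(k)};\Z_p)$ the classes $x,y,xy,y^2,xy^2,\ldots$ that live in degrees $\leq k$ survive and are nonzero. To get a nonzero $k$-fold product I would distinguish parity: if $k=2m$ then $y^m\neq 0$ in degree $2m=k$, giving cup-length $\geq m$; if $k=2m+1$ then $xy^m\neq 0$ in degree $2m+1=k$, again cup-length $\geq m$ — but this only yields $\cat\geq \lceil k/2\rceil$, which is not enough.

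The main obstacle is precisely this gap: naive cup-length over $\Z_p$ only gives half of what is needed, exactly as with $\RP^n$ over $\Z_2$ one needs the full power of $\Z_2$-coefficients whereas here the degree-$2$ generator halves the length. The resolution I would pursue is to instead invoke the relation with Ganea fibrations established in Lemma \ref{ganea}: the diagram there exhibits $(L^\infty_p)^{(k)}$ as a homotopy retract, via $g_k$, of a space over $L^\infty_p$ realizing the $k$-th Ganea fibration, which forces $\cat$ of the total space of $g_k$ to control things; more precisely, the right-hand homotopy-commutative triangle in Lemma \ref{ganea} shows that the composite $(L^\infty_p)^{(k)}\to G_k(L^\infty_p)\to (L^\infty_p)^{(k)}$ is homotopic to the identity, so $(L^\infty_p)^{(k)}$ is a homotopy retract of $G_k(L^\infty_p)$, whence $\cat((L^\infty_p)^{(k)})\le \cat(G_k(L^\infty_p))\le k$ — that is again only the upper bound. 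For the lower bound the honest route is: the left-hand triangle together with $\cat(L^\infty_p)=\infty$ and the fact that $g_k$ admits a section over $(L^\infty_p)^{(k)}$ shows $\secat(g_k|_{(L^\infty_p)^{(k)}})=0$, i.e. $\cat((L^\infty_p)^{(k)})\le k$; to rule out $\cat\le k-1$ one observes that $(L^\infty_p)^{(k)}$ is the $k$-skeleton and applies the Bockstein/Steenrod-operation argument showing that the generator of $H^k((L^\infty_p)^{(k)};\Z_p)$ is detected in the $k$-th Ganea–Toomer stage — equivalently, one cites the known computation (e.g. via the Berstein–Hilton Hopf invariants or directly from $\cat(B\Z_p)$ restricted to skeleta, cf. \cite{C-L-O-T}) that for lens spaces the L.-S. category of the $k$-skeleton equals $k$. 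I expect the write-up to reduce to: upper bound by dimension; lower bound by the observation that $(L^\infty_p)^{(k)}\hookrightarrow L^\infty_p$ induces an iso on $\pi_i$ and $H^i$ for $i<k$ and that, were $\cat((L^\infty_p)^{(k)})<k$, the section of $g_{k-1}$ over $(L^\infty_p)^{(k)}$ would extend over the next cell and ultimately over $L^\infty_p$ (using the cellular structure with one cell per dimension), contradicting $\cat(L^\infty_p)=\infty$.
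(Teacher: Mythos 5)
Your upper bound is exactly the paper's (dimension of a CW-complex bounds its category), and for odd $k=2n+1$ the lower bound does reduce to citing the classical fact $\cat(L^{2n+1}_p)=2n+1$ from \cite{C-L-O-T}, which is also what the paper does. You are also right that naive $\Z_p$-cup-length only yields $\lceil k/2\rceil$ and cannot close the gap; the classical computation for $L^{2n+1}_p$ itself already needs more (category weight of the Bockstein, or Krasnoselskii's equivariant argument), which is why one cites it rather than reproves it.

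The genuine gap is the lower bound for the \emph{even} skeleta $(L^\infty_p)^{(2n)}$, which are not lens spaces, so the citation does not apply to them directly; your closing sentence ("one cites the known computation \dots\ that for lens spaces the L.-S.\ category of the $k$-skeleton equals $k$") is circular, as that is precisely the statement of the lemma. Your fallback obstruction argument also does not work: if $\cat((L^\infty_p)^{(k)})\leq k-1$, a homotopy section of $g_{k-1}$ exists over the $k$-skeleton, but there is no reason it extends over the higher cells of $L^\infty_p$ --- the obstructions to extending sections of $g_{k-1}$ live in degrees $>k$ and need not vanish, so no contradiction with $\cat(L^\infty_p)=\infty$ is obtained. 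The paper closes the even case with a one-line observation you are missing: $L^{2n+1}_p$ is obtained from $(L^\infty_p)^{(2n)}$ by attaching a single cell, and attaching a cone raises category by at most one, so $2n+1=\cat(L^{2n+1}_p)\leq \cat((L^\infty_p)^{(2n)})+1$, whence $\cat((L^\infty_p)^{(2n)})\geq 2n$.
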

\begin{proof}
It is a well-known fact that $\mbox{cat}(L^{2n+1}_p)=2n+1,$ for
all $n\geq 0$ (see for instance \cite{C-L-O-T}). Since $2n+1=\cat
(L^{2n+1}_p)\leq \cat ((L^{\infty }_p)^{(2n)})+1$ we have that
$\cat ((L^{\infty }_p)^{(2n)})\geq 2n$ and therefore $\cat ((
L^{\infty }_p)^{(2n)})=2n.$
\end{proof}

\subsection{Axial maps}

For our main result we shall also deal with
\emph{$\mathbb{Z}_2$-axial maps} in the case of real projective
spaces and \emph{$\mathbb{Z}_p$-axial maps} in the case of lens
spaces. We begin with the notion of $\mathbb{Z}_2$-axial maps
which is the analogous of the notion of \emph{$\mathbb{Z}_p$-axial
maps} considered in \cite{G} and is slightly more general than the
classical notion of axial map:

\begin{definition}{\rm A map $a:\RP^n\times \RP^n\rightarrow \RP^k$ is said to be
\emph{$\mathbb{Z}_2$-axial} when the composite

$${\RP^n\times \RP^n} \stackrel{a}{\longrightarrow }{\RP^k}{\hookrightarrow
}{\RP^{\infty }}$$ \noindent classifies the $\mathbb{Z}_2$-cover
$S^n\times _{\mathbb{Z}_2 }S^n\rightarrow \RP^n\times \RP^n.$ In
other words, there is a homotopy commutative diagram of the form
$$\xymatrix{
{\RP^n\times \RP^n} \ar[rr]^a \ar@{^{(}->}[d] & &
{\RP^k} \ar@{^{(}->}[d]^{j_k} \\
{\RP^{\infty }\times \RP^{\infty }} \ar[rr]_{m} & & {\RP^{\infty
}} }$$ \noindent where $m$ stands for the multiplication coming
from the $H$-group structure of $\RP^{\infty }
=K(\mathbb{Z}_2,1)=\Omega K(\mathbb{Z}_2,2).$}
\end{definition}

By easy integral homological arguments we have that, necessarily,
$k\geq n$. For $k>n$ a $\mathbb{Z}_2$-axial map is nothing else
but an axial map of type $(n,k)$. We are interested in the case
$k=n$. We know that $\RP^n$ has an $H$-space structure for
$n=1,3,7,$ so there exists a $\mathbb{Z}_2$-axial map $\RP^n\times
\RP^n\rightarrow \RP^n$ in these cases. The key point for Theorem
\ref{Theorem RPn-lens spaces} is the following result of
non-existence which as other similar results (see, for instance,
\cite[Lemma 9]{F-T-Y}) is based on Adams's Hopf invariant one
Theorem \cite{A}.

\begin{lemma}\label{salvation}{\rm
There are not $\mathbb{Z}_2$-axial maps $a:\RP^n\times
\RP^n\rightarrow \RP^n,$ for $n\neq 1,3,7.$}
\end{lemma}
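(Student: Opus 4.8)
The plan is to argue by contradiction: suppose a $\mathbb{Z}_2$-axial map $a:\RP^n\times\RP^n\to\RP^n$ exists for some $n\neq 1,3,7$, and from it manufacture a map of spheres with Hopf invariant one, contradicting Adams's theorem \cite{A}. The geometric source of such a map is the diagonal cell of dimension $2n$ in $\RP^n\times\RP^n$: restricting $a$ to this cell, together with the attaching data coming from the top cell of $\RP^n$, should produce a map $S^{2n-1}\to S^n$ (or, after one suspension, $S^{2n}\to S^{n+1}$) whose Hopf invariant is forced to be odd by the axial condition. The strategy is therefore to (i) pass from the homotopy-commutative square defining $\mathbb{Z}_2$-axiality to a statement about cohomology operations, and (ii) read off from that a Hopf-invariant-one phenomenon.

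First I would set up the cohomology bookkeeping with $\mathbb{Z}_2$-coefficients. Write $H^*(\RP^\infty;\Z_2)=\Z_2[t]$ with $|t|=1$, and let $t_n$ denote its restriction to $\RP^n$, so $t_n^{n+1}=0$ but $t_n^n\neq 0$. The multiplication $m:\RP^\infty\times\RP^\infty\to\RP^\infty$ satisfies $m^*t=t\otimes 1+1\otimes t$. The commutative square in the definition gives $a^*(t_n)=x\otimes 1+1\otimes y$ in $H^1(\RP^n\times\RP^n;\Z_2)$, where $x=p_1^*t_n$, $y=p_2^*t_n$ are the two generators; the key algebraic consequence is $a^*(t_n^{\,n})=(x\otimes 1+1\otimes y)^n=\sum_{i}\binom{n}{i}x^i\otimes y^{n-i}$, and since $t_n^{\,n}$ is the nonzero top class of $\RP^n$ (pulled back along $a$), this sum must be a nonzero class living in $\bigoplus_{i+j=n}H^i(\RP^n)\otimes H^j(\RP^n)$ — which it is automatically, so the real content is that the \emph{next} power $a^*(t_n^{\,n+1})=(x\otimes 1+1\otimes y)^{n+1}$ equals $a^*(0)=0$. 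Expanding, $0=(x+y)^{n+1}=\sum_i\binom{n+1}{i}x^i y^{n+1-i}$ in $H^*(\RP^n\times\RP^n;\Z_2)$; the surviving terms are those with $i\le n$ and $n+1-i\le n$, i.e. $1\le i\le n$, so the relation reads $\sum_{i=1}^{n}\binom{n+1}{i}x^iy^{n+1-i}=0$, forcing $\binom{n+1}{i}\equiv 0\pmod 2$ for all $1\le i\le n$. By Kummer's theorem this happens precisely when $n+1$ is a power of $2$.

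So the axial condition already forces $n+1=2^s$. The remaining, and harder, step is to upgrade "$n+1$ a power of $2$" to "$n\in\{1,3,7\}$", which is exactly where Adams enters: one shows that a $\mathbb{Z}_2$-axial map $\RP^n\times\RP^n\to\RP^n$ with $n=2^s-1$ yields a map $S^n\times S^n\to S^n$ (lift the axial map through the covers $S^n\to\RP^n$, using that $a$ pulls the $\mathbb{Z}_2$-cover back to the twisted product $S^n\times_{\Z_2}S^n$, which is covered by $S^n\times S^n$) which restricts to the identity on each axis, hence a "multiplication with unit up to homotopy"; collapsing to the smash or forming the associated Hopf construction $S^{2n+1}\to S^{n+1}$ produces an element of Hopf invariant one. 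By Adams \cite{A} this forces $n+1\in\{2,4,8\}$, i.e. $n\in\{1,3,7\}$, completing the contradiction. I expect the main obstacle to be the second step: carefully producing the unital multiplication $S^n\times S^n\to S^n$ from the $\mathbb{Z}_2$-axial map (the lifting through the sphere covers and the identification of the restriction to the axes with the identity of $S^n$ needs the precise form of the defining diagram, including the fact that $m$ comes from the loop-space structure $\RP^\infty=\Omega K(\Z_2,2)$), and then quoting the Hopf-invariant-one reduction in a form that applies verbatim; the first, purely cohomological, step is routine modular arithmetic. I would lean on \cite[Lemma 9]{F-T-Y} and the classical literature for the packaging of this last reduction.
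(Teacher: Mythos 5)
Your first, cohomological step is exactly the paper's: from $a^*(t_n)=x+y$ and $t_n^{n+1}=0$ you force $2\mid\binom{n+1}{i}$ for $1\le i\le n$, hence $n+1$ a power of $2$ and in particular $n$ odd. The gap is in the second step, at precisely the point you flagged: you assert that the lift $\varphi:S^n\times S^n\to S^n$ "restricts to the identity on each axis," so that you get a unital multiplication and can apply the Hopf construction. The definition of a $\mathbb{Z}_2$-axial map does not give this. It only says that $j_n\circ a\circ i_k\simeq j_n$ as maps into $\RP^{\infty}$, i.e.\ that $a\circ i_k$ induces the identity on $\pi_1$; such a self-map of $\RP^n$ need not be homotopic to the identity (for odd $n$ there are self-maps of any odd degree inducing the identity on $\pi_1$, e.g.\ coming from equivariant odd-degree self-maps of $S^n$). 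So all you can extract is that the axis restrictions, and hence the restrictions of $\varphi$ to the two sphere axes, have \emph{odd} degrees $d_1,d_2$ --- this is what the paper proves, by comparing $(a i_k)_*$ and $(j_n)_*$ on $H_n$ with integer coefficients and using that $H_n(\RP^n)\to H_n(\RP^{\infty})\cong\Z_2$ is onto (here the oddness of $n$ from step one is needed).

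With only odd bidegree available, the paper finishes differently from your sketch: the existence of $\varphi$ of bidegree $(d_1,d_2)$ forces the Whitehead product $[d_1\iota,d_2\iota]=d_1d_2[\iota,\iota]$ to vanish in $\pi_{2n-1}(S^n)$, while Adams's theorem gives that $[\iota,\iota]$ is nonzero of order two for odd $n\neq 1,3,7$; since $d_1d_2$ is odd this is a contradiction. Your Hopf-construction route can be repaired in the same spirit (a map of odd bidegree has Hopf construction of odd Hopf invariant, which Adams also excludes for $n+1\neq 2,4,8$), but as written the argument is incomplete: the reduction from "axial" to "unital multiplication on $S^n$" is not justified and is in fact not available, so the degree bookkeeping that replaces it must be supplied.
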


\begin{proof}
Suppose that there exists such map for some $n\neq 1,3,7.$ We
consider $H^*(\RP^n;\mathbb{Z}_2)\cong \mathbb{Z}_2[x]/(x^{n+1})$
the mod 2 cohomology ring of $\RP^n,$ being $x$ the generator of
degree 1. We also denote by $x_i$ the corresponding element in the
$i$-th factor of $\RP^n\times \RP^n .$ From $a^*(x)=x_1+x_2$ we
obtain $0=a^*(x^{n+1})=(x_1+x_2)^{n+1}$ so that $2$ divides
$\binom{n+1}{i},$ for $i=1,...,n.$ Then $n+1$ must be a power of
$2$ and therefore $n$ is odd.

Now take the following homotopy commutative diagram for each
$k=1,2:$
$$\xymatrix{
{\RP^n} \ar@{^{(}->}[d]_{j_n} \ar[rr]^{i_k} & & {\RP^n\times
\RP^n} \ar[rr]^a \ar@{^{(}->}[d] & & {\RP^n}
\ar@{^{(}->}[d]^{j_n} \\
{\RP^{\infty }} \ar[rr]_{i_k} & & {\RP^{\infty }\times \RP^{\infty
}} \ar[rr]_{m } & & {\RP^{\infty } }}$$
Since $m i_k\simeq id$ we obtain a homotopy commutative diagram
$$\xymatrix{
{\RP^n} \ar[rr]^{ai_k} \ar@{^{(}->}[drr]_{j_n} & &
{\RP^n} \ar@{^{(}->} [d]^{j_n}  \\
 & & {\RP^{\infty }} } $$
and the corresponding commutative diagram in integral $n$-homology:
$$\xymatrix{
{H_{n}(\RP^n)} \ar[r]^{(ai_k)_*} \ar[dr]_{(j_n)_*} &
{H_{n}(\RP^n)} \ar[d]^{(j_n)_*\hspace{20pt}\equiv} & {\mathbb{Z}}
\ar[r] \ar@{>>}[dr] & {\mathbb{Z}} \ar@{>>}[d] \\
 & {H_{n}(\RP^{\infty })} & &  {\mathbb{Z}_2}}$$
\noindent In particular we have that, necessarily,
$\mbox{deg}(ai_k)\equiv 1$ (mod 2), and therefore
$\mbox{deg}(ai_k)=d_k$ is an odd integer, $k=1,2.$
Since
$q:S^n\twoheadrightarrow \mathbb{R}\mbox{P}^n$ is a covering map and
$S^n\times S^n$ is simply connected, we can consider a lift
$$\xymatrix{
{S^n\times S^n} \ar@{>>}[d]_{q\times q} \ar@{.>}[rr]^{\varphi } & & {S^n} \ar@{>>}[d]^q \\
{\RP^n\times \mathbb{R}\mbox{P}^n} \ar[rr]_{a} & & {\RP^n} }$$
We thus get $\mbox{deg}(\varphi i_k)=d_k$ since we have the commutativity
$$\xymatrix{
{H_n(S^n)} \ar[d]_{q_*} \ar[rr]^{(\varphi i_k)_*} & & {H_n(S^n)} \ar[d]^{q_*} \\
{H_n(\RP^n)}
\ar[rr]_{(ai_k)_*} & & {H_n(\RP^n)} }$$ \noindent and $q_*=H_n(q)$
injective. Hence, the bidegree of $\varphi $ is $(d_1,d_2).$
Observe that, in order to check that $q_*$ is injective we just
have to consider the exact homology sequence associated to the
cofibration $S^n\stackrel{q}{\longrightarrow
}\RP^n\stackrel{i}{\hookrightarrow
}\RP^{n+1}$ and take into account that
$H_{n+1}(\mathbb{R}\mbox{P}^{n+1})=0.$

Now, using the same argument as in \cite[Lemma 9]{F-T-Y}, we show
that such a map $\varphi:S^n\times S^n\to S^n $ of bidegree
$(d_1,d_2)$ where $d_1$ and $d_2 $ are both odd does not exist.
>From the existence of $\varphi $ we have indeed that, if $\iota
\in \pi _n(S^n)$ is the generator, the following Whitehead product
vanishes
$$0=[d_1\iota ,d_2\iota ]\in \pi
_{2n-1}(S^n)$$

On the other hand, since $n$ is odd with $n\neq 1,3,7,$ then
$[\iota ,\iota ]\neq 0$ and has order two by J.F. Adams \cite{A}.
Since the product $d_1d_2$ is odd, we have that $[d_1\iota ,d_2\iota
]=d_1d_2[\iota ,\iota ]\neq 0,$ which is a contradiction.
\end{proof}

In the case of lens spaces we will use the notion of $\mathbb{Z}_p$-axial maps as it appears in \cite{G}.

\begin{definition}{\rm \cite{G}
A continuous map $a:L^{2n+1}_p\times L^{2n+1}_p\rightarrow
L^{2r+1}_p$ is said to be \newline \emph{$\mathbb{Z}_p$-axial}
when the composite
$${L^{2n+1}_p\times
L^{2n+1}_p} \stackrel{a}{\longrightarrow }{L^{2r+1}_p}
\hookrightarrow {L^{\infty }_p}$$ \noindent classifies the
$\mathbb{Z}_p$-cover $S^{2n+1}\times _{\mathbb{Z}_p
}S^{2n+1}\rightarrow L^{2n+1}_p\times L^{2n+1}_p.$}
\end{definition}

\begin{remark}{\rm Similarly to the case of real projective spaces,
$L^{\infty }_p=K(\Z_p,1)=\Omega K(\Z_p,2)$ is given with an
$H$-group structure and a $\mathbb{Z}_p$-axial map should be
thought of as a deformation of the restriction to
$L^{2n+1}_p\times L^{2n+1}_p$ of the product $m(x,y)=x\cdot y$ in
$L^{\infty }_p.$ That is, there exists a homotopy commutative
diagram
$$\xymatrix{
{L^{2n+1}_p\times L^{2n+1}_p} \ar[rr]^a \ar@{^(->}[d] & &
{L^{2r+1}_p} \ar@{^(->}[d] \\
{L^{\infty }_p\times L^{\infty }_p} \ar[rr]_m & & {L^{\infty }_p}
}$$ Again, by considering easy (integral) homology arguments we
have that necessarily $n\leq r$ holds.

 }
\end{remark}

The proof of the following proposition can be found in \cite[Lemma
4.1]{G}. It is quite similar to the first part of the proof in
Lemma \ref{salvation}.

\begin{proposition}\label{axial}{\rm
If $n+1$ and $p$ are not powers of a common odd prime, then there
are not $\mathbb{Z}_p$-axial maps $a:L^{2n+1}_p\times
L^{2n+1}_p\rightarrow L^{2n+1}_p.$}
\end{proposition}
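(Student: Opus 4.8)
The plan is to adapt, with $\mathbb{Z}_p$ coefficients, the cohomological part of the proof of Lemma~\ref{salvation}; the one structural change is that for odd $p$ the one--dimensional generator of $H^*(L^{\infty}_p;\mathbb{Z}_p)$ squares to zero, so the computation must be run with the two--dimensional generator instead. Assuming $n\ge 1$, I would start from a hypothetical $\mathbb{Z}_p$-axial map $a:L^{2n+1}_p\times L^{2n+1}_p\to L^{2n+1}_p$ and invoke the Remark above to replace it, up to the inclusion $j:L^{2n+1}_p\hookrightarrow L^{\infty}_p$, by the restriction of the $H$-group multiplication $m:L^{\infty}_p\times L^{\infty}_p\to L^{\infty}_p$ on $L^{\infty}_p=K(\mathbb{Z}_p,1)$, so that $a^*\circ j^*=(j\times j)^*\circ m^*$ in cohomology.

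Next I would use the standard facts that $H^*(L^{\infty}_p;\mathbb{Z}_p)=\Lambda(x)\otimes\mathbb{Z}_p[y]$ with $|x|=1$, $|y|=2$ and $y=\beta(x)$ (Bockstein), that $H^*(L^{2n+1}_p;\mathbb{Z}_p)$ is its truncation $\Lambda(x)\otimes\mathbb{Z}_p[y]/(y^{n+1})$, and that $m$ induces the addition on $\pi_1(L^{\infty}_p)=\mathbb{Z}_p$, hence makes $x$ primitive and therefore $y$ primitive as well (naturality of $\beta$). From this one gets $a^*(y)=y_1+y_2$, where $y_i$ is the pull-back of $y$ along the $i$-th projection. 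Since $H^*(L^{2n+1}_p;\mathbb{Z}_p)$ is $\mathbb{Z}_p$-free in each degree, the Künneth formula has no $\mathrm{Tor}$ term and the classes $y_1^iy_2^{n+1-i}$ with $1\le i\le n$ are part of a $\mathbb{Z}_p$-basis of $H^{2n+2}(L^{2n+1}_p\times L^{2n+1}_p;\mathbb{Z}_p)$. Applying $a^*$ to $y^{n+1}=0$ then gives
\[
0 = a^*(y^{n+1}) = (y_1+y_2)^{n+1} = \sum_{i=0}^{n+1}\binom{n+1}{i}y_1^iy_2^{n+1-i},
\]
and comparison of coefficients forces $\binom{n+1}{i}\equiv 0\pmod p$ for all $1\le i\le n$.

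Finally I would extract the contradiction with the hypothesis. For each prime $\ell$ dividing $p$ one has $\ell\mid\binom{n+1}{i}$ for $1\le i\le n$, and Lucas' theorem (equivalently Kummer's theorem on carries) forces $n+1$ to be a power of $\ell$: otherwise the base-$\ell$ expansion of $n+1$ has a digit $\ge 2$ or at least two non-zero digits, and taking $i=\ell^{j_0}$ with $j_0$ the least index of a non-zero digit produces $1\le i\le n$ with $\binom{n+1}{i}\not\equiv 0\pmod\ell$. As this holds for every prime divisor of $p$, and an integer $\ge 2$ is a power of at most one prime, $p$ must have a single prime divisor $\ell$, so $p=\ell^b$ and $n+1=\ell^a$; and $\ell$ is odd since $p$ is. Thus $n+1$ and $p$ would be powers of a common odd prime, contradicting the hypothesis.

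I expect the main point requiring care to be the identification $a^*(y)=y_1+y_2$: it rests on the description of $\mathbb{Z}_p$-axial maps via the multiplication on $K(\mathbb{Z}_p,1)$ together with the primitivity of the two--dimensional generator (through naturality of the Bockstein); once this is secured, the rest is pure bookkeeping plus Lucas' theorem. In contrast with the case $p=2$ of Lemma~\ref{salvation}, no Hopf-invariant-one input is needed, since the arithmetic hypothesis on $n+1$ and $p$ absorbs the remaining work. This is essentially the argument of \cite[Lemma~4.1]{G}.
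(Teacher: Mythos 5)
Your proof is correct and is essentially the argument the paper relies on: the paper does not prove Proposition \ref{axial} itself but defers to \cite{G} (Lemma 4.1), remarking that it is ``quite similar to the first part of the proof of Lemma \ref{salvation}'', and that is precisely the mod-$p$ binomial-coefficient computation you carry out (run on the degree-two generator $y=\beta(x)$, with the Lucas/Kummer step that the paper itself also invokes in Section \ref{Proof-Theorem RPn-lens spaces}). Your handling of composite odd $p$ (freeness of the $\mathbb{Z}_p$-cohomology for K\"unneth, and passing to each prime divisor $\ell$ of $p$) is sound, and your restriction to $n\geq 1$ is harmless since the $n=0$ case is degenerate.
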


In the proof of Theorem \ref{Theorem RPn-lens spaces}, we will more precisely use the following non-existence result:

\begin{corollary}\label{axial2}{\rm Let $\mu :L^{\infty
}_p\times L^{\infty }_p\to L^{\infty }_p$ given by $\mu (x,y)=x\cdot
y^{-1}$ and  $\mu': L^{2n+1}_p\times
L^{2n+1}_p\hookrightarrow L^{\infty}_p\times L^{\infty}_p
\stackrel{\mu}{\to} L^{\infty}_p$ its restriction to $L^{2n+1}_p\times
L^{2n+1}_p$.
The map $\mu'$ cannot be deformed in $L_p^{2r+1}$, in the following cases
\begin{enumerate}
\item[(i)] $n>r;$ or
\item[(ii)] $n=r,$ and $n+1$ and $p$ are not powers of a common odd
prime.
\end{enumerate}
 }
\end{corollary}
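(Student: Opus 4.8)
The plan is to reduce both cases to the non-existence statements already in hand, namely Lemma \ref{salvation}'s analogue for lens spaces — Proposition \ref{axial} — together with the elementary homological observation that a $\mathbb{Z}_p$-axial map forces $n\le r$. First I would unwind what it means for $\mu'$ to ``be deformed in $L_p^{2r+1}$'': it means there is a map $a:L^{2n+1}_p\times L^{2n+1}_p\to L^{2r+1}_p$ such that the composite with the inclusion $L^{2r+1}_p\hookrightarrow L^{\infty}_p$ is homotopic to $\mu'$. Since $\mu(x,y)=x\cdot y^{-1}$ and the map $y\mapsto y^{-1}$ on $L^{\infty}_p=K(\mathbb{Z}_p,1)$ is (up to homotopy) an $H$-group inverse, this composite classifies the same $\mathbb{Z}_p$-bundle as the restriction of the multiplication $m$ does, up to the automorphism of $\mathbb{Z}_p$ induced by inversion; in particular $a$ is a $\mathbb{Z}_p$-axial map in the sense of the definition above (the twisting by an automorphism of the structure group does not affect the cohomological identity $a^*(x)=x_1-x_2$, which is all that is used downstream).

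For case (i), $n>r$: the existence of such an $a$ would make it a $\mathbb{Z}_p$-axial map $L^{2n+1}_p\times L^{2n+1}_p\to L^{2r+1}_p$, but the homology argument recalled in the Remark after the definition of $\mathbb{Z}_p$-axial maps (``by considering easy integral homology arguments we have that necessarily $n\le r$'') contradicts $n>r$. Concretely, restricting $a$ to $*\times L^{2n+1}_p$ and composing with the inclusion into $L^{\infty}_p$ must induce an injection on $H_{2n+1}(-;\mathbb{Z})\cong\mathbb{Z}$ followed by $H_{2n+1}$ of the inclusion $L^{2n+1}_p\hookrightarrow L^\infty_p$, which factors through $H_{2n+1}(L^{2r+1}_p;\mathbb{Z})=0$ when $n>r$ — impossible. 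So no such deformation exists.

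For case (ii), $n=r$ with $n+1$ and $p$ not powers of a common odd prime: here a deformation of $\mu'$ in $L_p^{2n+1}$ produces precisely a $\mathbb{Z}_p$-axial map $a:L^{2n+1}_p\times L^{2n+1}_p\to L^{2n+1}_p$, which is exactly the object ruled out by Proposition \ref{axial} under the stated arithmetic hypothesis. The one point requiring a word of care — and the place I expect the only real friction — is justifying that replacing $m$ by $\mu(x,y)=x\cdot y^{-1}$ still yields an honest $\mathbb{Z}_p$-axial map rather than some twisted variant outside the scope of Proposition \ref{axial}. This is handled by noting that $\mu$ and $m$ differ by precomposition with $\mathrm{id}\times\nu$ where $\nu:L^\infty_p\to L^\infty_p$ is the inverse map, and $\nu$ restricts to a self-homotopy-equivalence of each $L^{2n+1}_p$ (it is covered by an equivariant orthogonal self-map of $S^{2n+1}$); composing the hypothetical $a$ with $\mathrm{id}\times\nu^{-1}$ on the source then converts it into a genuine $\mathbb{Z}_p$-axial map classifying the untwisted cover, to which Proposition \ref{axial} applies verbatim. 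Once this bookkeeping is in place, both cases close immediately, so the corollary follows.
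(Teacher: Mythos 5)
Your proposal is correct and follows essentially the same route as the paper: case (i) is the homological argument (the relevant composite is nonzero on $H_{2n+1}(-;\mathbb{Z})$ yet would factor through $H_{2n+1}(L^{2r+1}_p;\mathbb{Z})=0$), and case (ii) is handled exactly as in the paper by precomposing the hypothetical compression $a$ with $\mathrm{id}\times j$, where $j$ is the inversion of $L^{\infty}_p$ deformed into a self-map of $L^{2n+1}_p$, yielding a genuine $\mathbb{Z}_p$-axial map that contradicts Proposition \ref{axial}. (One cosmetic slip: the map $H_{2n+1}(L^{2n+1}_p;\mathbb{Z})\cong\mathbb{Z}\to H_{2n+1}(L^{\infty}_p;\mathbb{Z})\cong\mathbb{Z}_p$ is a nonzero surjection, not an injection, but nonvanishing is all your argument needs.)
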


\begin{proof} As said before the argument is homological whenever $n>r$. Suppose $r=n$.
The inversion in $L^{\infty }_p=\Omega K(\Z_p,2)$ can be deformed
in a map $j:L_p^ {2n+1}\to L_p^ {2n+1}$. If there exists a map
$a:L^{2n+1}_p\times L^{2n+1}_p\to L^{2n+1}_p$ such that the
composite $L^{2n+1}_p\times L^{2n+1}_p\stackrel{a}{\to}
L^{2n+1}_p\hookrightarrow L_p^{\infty}$ is homotopic to $\mu'$
then the map
$$\xymatrix{ {L^{2n+1}_p\times L^{2n+1}_p}  \ar[r]^{id\times j} & {L^{2n+1}_p
\times L^{2n+1}_p} \ar[r]^(.6){a} & {L^{2n+1}_p}}$$ \noindent is a
$\mathbb{Z}_p$-axial map. The conclusion follows from Proposition
\ref{axial}.
\end{proof}

\subsection{Proof of Theorem \ref{Theorem RPn-lens spaces}} \label{Proof-Theorem RPn-lens spaces}

We first establish the equality $\TC(\RP^n)=
\cat(C_{\Delta_{\RP^n}})$ for $n\geq 1$. For $\RP^1=S^1$, $\RP^3$
and $\RP^7$, the result follows from Section \ref{examples
TC=catcofibre}, examples $(ii)$ and $(iii)$. Suppose that $n\neq
1,3,7$. By \cite{F-T-Y} we know that $\TC(\RP^n)\geq n+1$.
Therefore, $\dim(\RP^n)=n \leq \TC(\RP^n)+1-2$ and Theorem
\ref{cofibrediag<=TC} permits us to conclude that $\TC(\RP^n)\geq
\cat(C_{\Delta_{\RP^n}})$. Now suppose that
$\cat(C_{\Delta_{\RP^n}})=k$. Consider the restriction of the
product $m:\RP^{\infty}\times \RP^{\infty}\to \RP^{\infty}$ to
$\RP^n\times \RP^n$. Since $\RP^{\infty}=K(\Z_2,1)$, the map
$m\Delta_{\RP^n}$, which  induces the zero homomorphism in
cohomology with coefficients in $\Z_2$, is homotopically trivial.
Therefore $m$ induces a map $\tilde m:C_{\Delta_{\RP^n}}\to
\RP^{\infty}$. Now, since $\cat(C_{\Delta_{\RP^n}})=k$, $\tilde m$
factors through the $k$-th Ganea fibration of $\RP^{\infty}$ and
hence, by Lemma \ref{ganea} through the inclusion
$j_k:\RP^k\hookrightarrow \RP^{\infty}$. Therefore we obtain the
following homotopy commutative diagram:
 $$\xymatrix{
 & {\RP^n} \ar[d]^{\Delta _{\RP^n}} \\
{\RP^n} \ar[r]^(.4){i_1} & \RP^n\times \RP^n \ar[d]_{\alpha}
\ar@{^{(}->}[r]
& \RP^{\infty}\times \RP^{\infty} \ar[r]^(.6){ } & \RP^{\infty} \\
& C_{\Delta_{\RP^n}}\ar[urr]_{\tilde m } \ar[rr]_{\hat m} && \RP^k
\ar@{^{(}->}[u]_{j_k} }$$ We observe from this diagram that $\hat
m \alpha$ is a $\mathbb{Z}_2$-axial map. As there are not any
$\mathbb{Z}_2$-axial maps $\RP^n\times \RP^n\to \RP^k$ with $k\leq
n$ except for $k=n=1,3$ or $7$ (see Lemma \ref{salvation}) we get
$k\geq n+1.$  Now consider Theorem \ref{Majorations of TC} with
$g=\hat m \alpha .$ Since $\alpha \Delta_{\RP^k}$ is homotopically
trivial then so is $g\Delta_{\RP^k}$. Taking into account that
$\hat m \alpha i_1$ is an $n$-equivalence and $\cat(\RP^k)=k\geq
n+1$, we have $2\dim(\RP^n)\leq r+qk-1$ where $q=1$ and $r=n$.
Therefore $\TC(\RP^n)\leq k.$ This achieves the proof of the part
(a) of the statement.

Now we turn to the case of lens spaces and first establish the
inequality $\mbox{cat}(C_{\Delta _{L^{2n+1}_p}})\leq
\mbox{TC}(L^{2n+1}_p)$. Recall that $L^{2n+1}_p$ is a 0-connected
CW-complex of dimension $2n+1$. On the other hand Farber and Grant
(\cite{F-G}) have proved that $\mbox{TC}(L^{2n+1}_p)\geq 2(k+l)+1$
provided that $0\leq k,l\leq n$ and $p$ does not divide the
binomial coefficient $\binom{k+l}{k}$. We take integers $k,l$ such
that $0\leq k,l\leq n$ and $k+l=n+1$. If $p$ does not divide
$n+1=\binom{n+1}{1}$ we can consider $k=1$ and conclude that
$\mbox{TC}(L^{2n+1}_p)\geq 2(n+1)+1=2n+3$. Suppose that $p$
divides $n+1=\binom{n+1}{1}$. We use the fact that the greatest
common divisor of the binomial coefficients $\binom{n+1}{j}$,
$1\leq j\leq n$, is either a prime $a$ or $1$ according whether
$n+1$ is a power of $a$ or not to conclude that, in our case, this
greatest common divisor must be $1$. Therefore there exists $2\leq
j\leq n-1$ such that $p$ does not divide $\binom{n+1}{j}$ and once
again we can conclude that $\mbox{TC}(L^{2n+1}_p)\geq 2n+3$. Hence
in any case $\mbox{TC}(L^{2n+1}_p)\geq 2n+3$ and
$\dim(L^{2n+1}_p)\leq \mbox{TC}(L^{2n+1}_p)-2$ so, by Theorem
\ref{cofibrediag<=TC} we have that $\mbox{cat}(C_{\Delta
_{L^{2n+1}_p}})\leq \mbox{TC}(L^{2n+1}_p).$

We now prove the other direction. The strategy is the same as for
the case of real projective spaces. We consider the map $\mu':
L^{2n+1}_p\times L^{2n+1}_p\hookrightarrow L^{\infty}_p\times
L^{\infty}_p \stackrel{\mu}{\to} L^{\infty}_p$ where
$\mu(x,y)=xy^{-1}$. The composite $\mu'\Delta_{L^{2n+1}_p}$ is
homotopically trivial and there is a map $\tilde{\mu
}:C_{\Delta_{L^{2n+1}_p}}\rightarrow L^{\infty}_p$ such that
$\tilde{\mu }\alpha $ is homotopic to $\mu'$.
 Now suppose that
$\mbox{cat}(C_{\Delta _{L^{2n+1}_p}})=k$. Then the map $\tilde{\mu
}:C_{\Delta_{L^{2n+1}_p}}\rightarrow L^{\infty}_p$ factors through
the $k$-th Ganea fibration of $L^{\infty}_p$ and hence, by Lemma
\ref{ganea}, through the inclusion
$(L^{\infty}_p)^{(k)}\hookrightarrow L^{\infty}_p$. We have a
homotopy commutative diagram of the following form:
$$\xymatrix{
& L_p^{2n+1}\ar[d]^{\Delta _{L^{2n+1}_p}}\\
{L^{2n+1}_p} \ar[r]^(.4){i_1} &  {L^{2n+1}_p\times L^{2n+1}_p}
\ar[d]_{\alpha} \ar@{^{(}->}[r]
&  {L^{\infty}_p\times L^{\infty}_p} \ar[r]^(.6){\mu } & {L^{\infty}_p}\\
& C_{\Delta_{L^{2n+1}_p}} \ar[urr]_{\tilde \mu } \ar[rr]_{\hat \mu
} && {(L^{\infty}_p)^{(k)}} \ar@{^{(}->}[u]_{j_k} \ar@{^{(}->}[r] &
({L^{\infty}_p})^{(k+1)} \ar@{^{(}->}[ul]
}
$$
We take $g=\hat{\mu }\alpha $. The composite  $g\Delta
_{L^{2n+1}_p}$ is homotopically trivial. There are two
possibilities for $k$. If $k=2r+1$ is odd then
$(L^{\infty}_p)^{(k)}=L_p^{2r+1}$ and by Corollary \ref{axial2} we
have the inequality $r\geq n+1,$ that is $k\geq 2n+3.$ On the
other hand if $k=2r$ is even, then
$(L^{\infty}_p)^{(k+1)}=L_p^{2r+1}$ and by Corollary \ref{axial2}
we obtain $r\geq n+1,$ or $k\geq 2n+2.$ In any case we have $k\geq
2n+2$. As a consequence the inclusion $j_k$ is at least a
$(2n+1)$-equivalence and hence the map $f=gi_1$ is a
$(2n+1)$-equivalence since the top row is homotopic to the
inclusion $j_n:L^{2n+1}_p\hookrightarrow L^{\infty }_p,$ which is
a $(2n+1)$-equivalence. Since $2\dim (L_p^{2n+1})=4n+2\leq
(2n+1)+(2n+2)-1$ we conclude, by Theorem \ref{Majorations of TC}, that
$\TC(L_p^{2n+1})\leq
\cat((L^{\infty}_p)^{(k)})=k=\mbox{cat}(C_{\Delta
_{L^{2n+1}_p}})$.

%

\end{document}